\documentclass[lettersize,journal]{IEEEtran}
\usepackage{amsmath,amsfonts}
\usepackage{algorithmic}
\usepackage{algorithm}
\usepackage{array}
\usepackage[caption=false,font=normalsize,labelfont=sf,textfont=sf]{subfig}
\usepackage{textcomp}
\usepackage{stfloats}
\usepackage{url}
\usepackage{verbatim}
\usepackage{graphicx}
\usepackage{cite}
\usepackage{xcolor}
\usepackage{color}
\usepackage{multirow}
\usepackage{enumitem}
\usepackage{threeparttable}
\hyphenation{op-tical net-works semi-conduc-tor IEEE-Xplore}
\usepackage{amsmath,amsthm}
 \usepackage{amssymb}
\usepackage{amsfonts}
\usepackage{framed}
\usepackage{lipsum}
\usepackage{graphicx}
\usepackage{mathrsfs}
\usepackage{mathtools}
\usepackage{booktabs}

\newtheorem{theorem}{Theorem}

\newtheorem{proposition}[theorem]{Proposition}

\allowdisplaybreaks[4]

\linespread{0.94}  

\begin{document}

\title{Distributionally Robust Frequency-Constrained Microgrid Scheduling Towards Seamless Islanding}

\author{Lun Yang, Haoxiang Yang, Xiaoyu Cao, Xiaohong Guan,~\IEEEmembership{Fellow,~IEEE}
}


\IEEEpubid{}

\maketitle

\begin{abstract}
Unscheduled islanding events of microgrids result in the transition between grid-connected and islanded modes and induce a sudden and unknown power imbalance, posing a threat to frequency security. To achieve seamless islanding, we propose a distributionally robust frequency-constrained microgrid scheduling model considering unscheduled islanding events. This model co-optimizes unit commitments, power dispatch, upward/downward primary frequency response reserves, virtual inertia provisions from renewable energy sources (RESs), deloading ratios of RESs, and battery operations, while ensuring the system frequency security during unscheduled islanding. We establish an affine relationship between the actual power exchange and RES uncertainty in grid-connected mode, describe RES uncertainty with a Wasserstein-metric ambiguity set, and formulate frequency constraints under uncertain post-islanding power imbalance as distributionally robust quadratic chance constraints, which are further transformed by a tight conic relaxation. We solve the proposed mixed-integer convex program and demonstrate its effectiveness through case studies.

\end{abstract}

\begin{IEEEkeywords}
Microgrid scheduling, frequency security, islanding, quadratic chance constraint, distributional robustness.
\end{IEEEkeywords}

\vspace{-0.3cm}

\section{Introduction}
\IEEEPARstart{M}{icrogrids} integrate diesel generators (DGs), renewable energy sources (RESs), battery energy storage systems (BESSs), and local demands and provide a promising pathway to achieve flexible and low-carbon power supply \cite{IEEE-2018, Hirsch-RSER-2018}. Microgrids can operate in both grid-connected and islanded modes~\cite{Katiraei-TPD-2005}. Their islanding capability is critical to enhance power supply reliability under extreme conditions or main grid faults. Nevertheless, sudden islanding leads to a large power imbalance, which makes maintaining frequency stability difficult since microgrids alone cannot provide enough rotational inertia~\cite{Zheng-IJEPES-2018}. Therefore, developing a new microgrid scheduling approach is necessary to guarantee frequency security seamlessly through unscheduled islanding transitions.

Several previous works have studied the problem regarding microgrid operations with islanding transitions, primarily focused on pre-positioning reserves. Liu \emph{et al.}~\cite{Liu-EPSR-2017} propose the probability of successful islanding (PSI) concept and include it in microgrid scheduling, where upward/downward reserves are optimized. Hemmati~\emph{et al.}~\cite{Hemmati-IEEESJ-2020} introduce the PSI to the reconfigurable microgrid scheduling. Wu~\emph{et al.}~\cite{Wu-TSG-2020} propose a two-stage chance-constrained model to determine reserves for the microgrid operation mode switching. However, such a strategy without considering frequency dynamics may be insufficient for frequency security during the islanding transition.

To ensure frequency security in case of a contingency, a natural idea is to incorporate post-contingency dynamic frequency requirements, i.e., constraints on rate-of-change-of-frequency (RoCoF), maximum frequency deviation (or frequency nadir/zenith), and quasi-steady state, into system scheduling. Such \emph{frequency-constrained scheduling problems} have been extensively studied in bulk power systems \cite{Teng-TPS-2016, Yang-TSG-2022, Wen-TPS-2016, Badesa-TPS-2020, Malley-TPS-2022, Yang-TSE-2023}, but for microgrid scheduling following an unscheduled islanding event, research is still limited. 

Wen \emph{et al.}~\cite{Wen-TPS-2019} embed frequency constraints, derived from discretized-time frequency dynamics, in a microgrid dispatch model considering unscheduled islanding events.
Javadi \emph{et al.}~\cite{Javadi-TPS-2022} expand upon this formulation, adding the commitment decision of generators. However, neither of those takes into account the frequency support from RESs. In fact, inverter-based resources (IBRs) like RESs are able to provide frequency support; e.g., via deloading~\cite{Dreidy-RSER-2017}. It may be infeasible to obtain a microgrid scheduling scheme with frequency security only considering frequency support provided by DGs. Gholami \emph{et al} \cite{Gholami-TCNS-2019} consider the frequency support from IBRs and propose a comprehensive frequency-constrained real-time operation framework for multi-microgrids under an islanding event, but the ON/OFF schedule of generating units is predetermined, and the system inertia is fixed \emph{a priori}. 

Subsequently, Zhang \emph{et al.}~\cite{Zhang-TPS-2021} propose a deep learning-assisted frequency-constrained microgrid scheduling (FCMS) model considering frequency support from wind turbines via deloading. Chu \emph{et al.}~\cite{Chu-TSG-2021} co-optimize the non-critical load shedding and the virtual inertia from wind turbines. However, both works neglect the uncertainty associated with wind power outputs. Moreover, the power imbalance caused by islanding is assumed to have a prespecified direction (i.e., importing power) in \cite{Zhang-TPS-2021} and roughly replaced with a \emph{known} load increase in \cite{Chu-TSG-2021} to facilitate the formulation of frequency nadir constraints. Such settings may not be true in practice as the post-islanding power imbalance is the power exchange between the microgrid and the main gird in grid-connected mode, which is unknown \emph{a priori} and determined by optimal grid-connected microgrid scheduling. The recent work of~\cite{Qi-TSG-2023} develops a stochastic FCMS model considering a random load increase but within the context of an isolated microgrid. 

Current works regarding FCMS under unscheduled islanding have the following gaps. Firstly, none of the existing studies account for the \emph{uncertain} power imbalance, which results from the RES uncertainty. It is unclear how such an uncertain power imbalance, together with the frequency constraints, will impact the scheduling decisions. Secondly, existing works lack a holistic co-optimization of generators' scheduling, primary frequency response (PFR) reserves, virtual inertia of BESSs and RESs, as well as RES deloading ratios to achieve frequency security. In this work, we aim to fill the research gaps above with the following contributions:

\begin{itemize}
    \item  We propose a distributionally robust FCMS (DR-FCMS) model, co-optimizing generators' scheduling, PFR reserves, virtual inertia of BESSs/RESs, and RES deloading ratios, under sudden islanding with RES uncertainty.  
    \item We model RES uncertainty via a new Wasserstein-metric ambiguity set and formulate the frequency constraints under uncertain post-islanding power imbalance as distributionally robust (DR) chance constraints, where we design a tight conic reformulation and solve the proposed DR-FCMS model as a mixed-integer convex program.
    \item We perform extensive benchmark analyses against alternative decision models and uncertainty models, with which we show our method yields an economical microgrid schedule with frequency security. 
\end{itemize}
The rest of the paper is organized as follows: Section~\ref{sec.Frequency dynamics} formulates the frequency constraints; Section~\ref{sec.DR-FCMS} connects the frequency constraints to the microgrid scheduling and describes the uncertainty model; Section~\ref{sec.Methods} covers the tractable reformulation of the DR chance constraints (DRCC); Section~\ref{sec.Case} details the benchmark analyses; and Section~\ref{sec.Conclusion} concludes the paper with a brief discussion of future work. 

\section{Formulation of Frequency Constraints}\label{sec.Frequency dynamics}
Unscheduled islanding leads to a sudden loss of power exchange between the microgrid and the main grid, which affects microgrid's frequency stability. The frequency decreases if the microgrid imports power from the main grid before islanding and increases vice versa, as illustrated in Fig.~\ref{fig1:frequency_dynamics}. In a microgrid, besides synchronous DGs, the frequency support from IBRs, such as RESs and BESSs, is useful to contain the frequency excursion. The RESs operate in deloading mode for upward PFR reserve to arrest the frequency drop while BESSs can contain the frequency drop/rise by discharging/charging.  We approximate frequency dynamics of the microgrid by the swing equation \cite{Kundur-Book-1994}, assuming we use virtual inertia control for RESs and droop control for BESSs~\cite{Shen-TSE-2023}:
\begin{figure}[t]
    \centering   \includegraphics[width=0.45\textwidth]{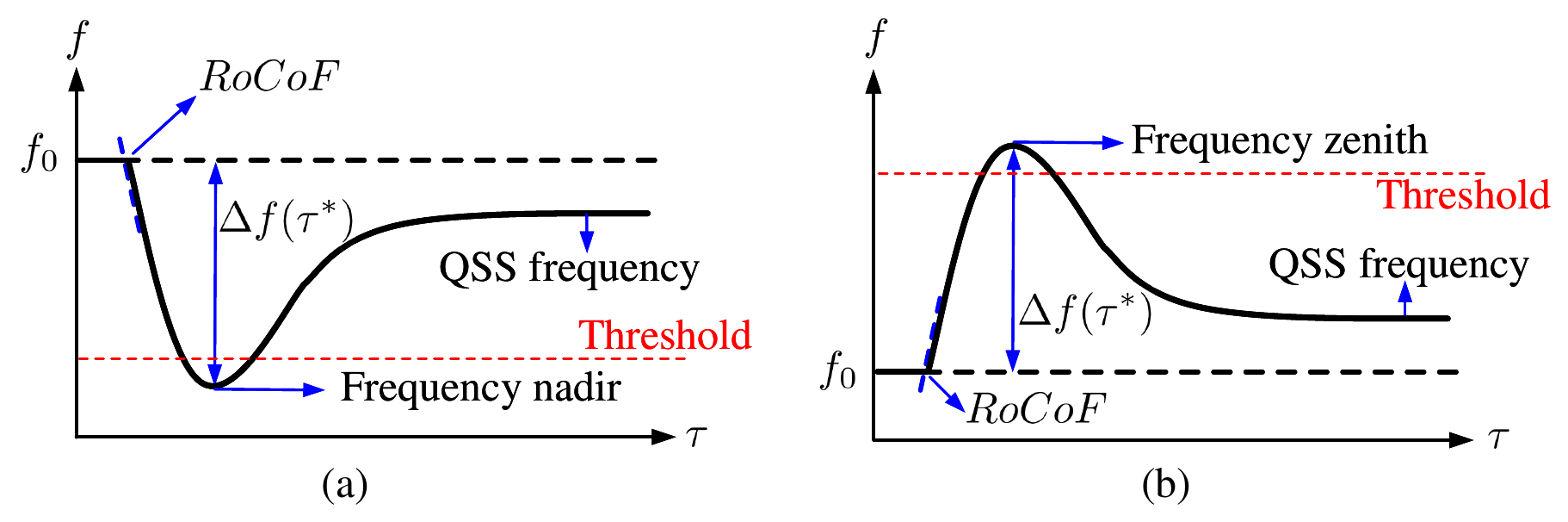}
        \vspace{-0.4cm}
	\caption{Microgrid frequency excursion after an unscheduled islanding event: (a) frequency falls when $p_t^{imb}>0$ or (b) frequency rises when $p_t^{imb}<0$.}
    \label{fig1:frequency_dynamics}
\end{figure}

{\small \begin{equation}
\label{SWINGequations}
 2H_t\frac{d\Delta f(\tau)}{d\tau}=\sum_{i \in \mathcal{G}} \Delta P_i(\tau)+\sum_{s \in \mathcal{S}} \Delta P_s(\tau) +\sum_{b \in \mathcal{B}} \Delta P_b(\tau)-p_t^{imb}
\end{equation}}

where $\Delta f(\tau)$ is the frequency deviation caused by the post-islanding power imbalance $p_t^{imb}$. We index the DGs, RESs and BESSs by \(i \in \mathcal{G}\), \(s \in \mathcal{S}\), and \(b \in \mathcal{B}\), respectively. A positive \(p_t^{imb}\) indicates that the microgrid imports power from the main grid. We denote the PFRs provided by DGs, RESs, and BESSs as $\Delta P_i(\tau)$, $\Delta P_s(\tau)$, and $\Delta P_b(\tau)$. Note that the load damping level in the power-electronic dominated microgrid is generally low and thus neglected in (\ref{SWINGequations}) similar to the setting in \cite{Malley-TPS-2022,Badesa-TPS-2020}.

The system aggregated inertia $H_t$ in (\ref{SWINGequations}) contributed from DGs, RESs, and BESSs is calculated by

{\small
\begin{equation*} 
H_t = \frac{1}{f_0} \left(\sum_{i\in \mathcal{G}}H_i^G g_i^{P,\max} x_{i,t}+\sum_{s\in \mathcal{S}}H_{s,t}^S p_s^{\max} + \sum_{b\in \mathcal{B}}H_{b,t}^B p_b^{\max}\right)
\end{equation*}
}

where $f_0$ is the nominal frequency, $g_i^{P,\max}$, $p_s^{\max}$, and $p_b^{\max}$ denote power output capacities. The parameter \(H_i^G\) is the inertia constant of DG $i$. The binary variable $x_{i,t}$ determines if DG $i$ is on at time $t$. The variables $H_{s,t}^S$ and $H_{b,t}^B$ are virtual inertia of RES $s$ and BESS $b$, which will be optimized in the microgrid scheduling.

In accordance with \cite{Teng-TPS-2016, Yang-TSG-2022, Wen-TPS-2016, Badesa-TPS-2020, Malley-TPS-2022, Yang-TSE-2023}, a linear ramping model is adopted to approximate the PFR dynamics, whose effectiveness is demonstrated in \cite{Badesa-TPS-2020, Yang-TSE-2023}. Indeed, detailed PFR dynamics modeling can enhance the accuracy but hinders deriving the closed-form solution to (\ref{SWINGequations}) and prohibits the tractability of constraints on frequency requirements. We present the PFRs $\Delta P_i(\tau)$, $\Delta P_s(\tau)$, and $\Delta P_b(\tau)$ with a linear ramping model:

{\small
\begin{subequations}
\begin{align}
\label{PFRgen}
&\Delta P_i(\tau)=\left\{\begin{array}{*{20}{l}}
0,&\textrm{if}~0\leq\tau<T_{DB}\vspace{1ex}\\
R_{i,t}^{G}\frac{\tau-T_{DB}}{T_G},&\textrm{if}~T_{DB}\leq\tau<T_G+T_{DB}\vspace{1ex}\\
R_{i,t}^{G}, &\textrm{if}~\tau \geq T_{G}+T_{DB}
\end{array}\right.\\
\label{PFRres}
&\Delta P_s(\tau)=\left\{\begin{array}{*{20}{l}}
R_{s,t}^{S}\frac{\tau}{T_E}, &\textrm{if}~0\leq\tau<T_E \vspace{1ex}\\
R_{s,t}^{S},&\textrm{if}~\tau \geq T_E
\end{array}\right.\\
\label{PFRess}
&\Delta P_b(\tau)=\left\{\begin{array}{*{20}{l}}
R_{b,t}^{B}\frac{\tau}{T_E},~~\textrm{if}~~0\leq\tau<T_E\vspace{1ex}\\
R_{b,t}^{B},~~~~~~\textrm{if}~~ \tau \geq T_E
\end{array}\right.
\end{align}

\end{subequations}}where we denote PFR reserve capacities as $R_{i,t}^{G}$, $R_{s,t}^{S}$, and $R_{b,t}^{B}$, respectively. $T_{DB}$ is the delivery delay of DG due to the deadband of DG's governor. Since governors control the delivery speed of PFRs from DGs while that of PFRs from IBRs, including RESs and BESSs, are controlled by power electronic devices, we have the relationship between DGs' and IBRs' delivery times as $T_G>T_E$.

Given the frequency dynamics above, we formulate the following frequency constraints, including limitations on RoCoF, maximum frequency deviation, and quasi-steady state.

\subsubsection{Constraints on RoCoF}
The greatest magnitude of RoCoF occurs at $\tau=0$ after the islanding event. From~(\ref{SWINGequations}), we have $RoCoF=\frac{-p_t^{imb}}{2H_t}$. As the sign of \(p_t^{imb}\) is unknown, we model two-sided constraints on RoCoF: 
\begin{align}
    \label{RoCoF}
    -2H_t \cdot RoCoF^{\max}\leq p_t^{imb}\leq 2H_t\cdot RoCoF^{\max}
\end{align}
where $RoCoF^{\max}$ is the permissible maximum RoCoF.
\subsubsection{Constraints on Maximum Frequency Deviation} The maximum frequency deviation occurs at frequency nadir or zenith when PFRs fully cover the power imbalance. Equations~(\ref{PFRgen})-(\ref{PFRess}) show the total PFRs is a piecewise linear function. The maximum frequency deviation must occur before $T_G+T_{DB}$ so that the quasi-steady state constraint (\ref{QSS}) can hold. 
In practice, PFR controlled by power electronic devices is fast ($T_E\approx 1$ s). Only when the system has extremely low inertia can the maximum frequency deviation occur within $T_E$, but the constraint on RoCoF is hard to hold in this situation. Hence, it is reasonable to assume that the maximum frequency deviation occurs after $T_E$ but before $T_G+T_{DB}$. 
%
%
%
%
It is shown in \cite{Badesa-TPS-2020} that the maximum frequency deviation is:

{\small
\begin{align}
\label{maximumfrequencydeviation}
\Delta f(\tau^*)=\frac{1}{2H_t}\left(\frac{-\left(p_t^{imb}-R^E_t+R_t^GT_{DB}/T_G\right)^2}{2R^G_t/T_G}-\frac{R^E_tT_E}{2}\right).
\end{align}}

The sign of maximum frequency deviation $\Delta f(\tau^*)$ depends on the sign of $p_t^{imb}$. Therefore, we construct the constraint on $\Delta f(\tau^*)$ for two cases: $p_t^{imb}>0$ and $p_t^{imb}<0$.

1) When $p_t^{imb}>0$, the system frequency drops and $\Delta f(\tau^*)<0$, requiring upward PFR reserves, $R^G_t\geq 0$ and $R^E_t\geq 0$. We let $R^G_t=R^{G,U}_t$, $R^E_t=R^{E,U}_t$ where $R^{G,U}_t=\sum_{i\in \mathcal{G}} R_{i,t}^{G,U}$ and  $R^{E,U}_t=\left(\sum_{s\in \mathcal{S}}R_{s,t}^{S,U}+\sum_{b\in\mathcal{B}}R_{b,t}^{B,U}\right)$ denote the total upward PFR reserves of DGs and IBRs (i.e., RESs and BESSs). Constraint on the maximum frequency deviation is formulated as a rotated conic constraint:
%
%
\begin{align}
\label{FnadirUP_SOC}
\nonumber \left(p_t^{imb}-R^{E,U}_t+R_t^{G,U}T_{DB}/T_G\right)^2 \\ \leq \left(4\Delta f^{\max}H_t-R^{E,U}_tT_E\right)R^{G,U}_t/T_G
\end{align}
where $\Delta f^{\max}$ is the allowable maximum frequency deviation.

2) When $p_t^{imb} < 0$, the system frequency rises and $\Delta f(\tau^*)>0$, requiring downward PFR reserves, $R^G_t\leq 0$ and $R^E_t\leq 0$. We let $R^G_t=-R^{G,D}_t$, $R^E_t=-R^{E,D}_t$ where $R^{G,D}_t=\sum_{i\in \mathcal{G}} R_{i,t}^{G,D}$ and  $R^{E,D}_t=\sum_{b\in\mathcal{B}}R_{b,t}^{B,D}$ are the total downward PFR reserves of DGs and BESSs. Similar to~\eqref{FnadirUP_SOC}, we formulate a rotated conic constraint:
\begin{align}
\label{FnadirDW_SOC}
\nonumber \left(p_t^{imb}+R^{E,D}_t-R_t^{G,D}T_{DB}/T_G\right)^2 \\ \leq\left(4\Delta f^{\max}H_t-R^{E,D}_tT_E\right)R^{G,D}_t/T_G.
\end{align}

The maximum frequency deviation constraints above, including two cases $p_t^{imb}> 0$ and $p_t^{imb} < 0$, belong to the type of \emph{if-then} conditional constraint. By introducing the big-M, we convert the above \emph{if-then} conditional constraint into:
\begin{subequations}
\label{FnadirBinary_reformulation}
\begin{align}
\label{PowerImbalanceDirection}
&-M(1-u_t)\leq p_t^{imb}\leq Mu_t \\
\label{zt_UP}
& -M(1-u_t)\leq z_t-R^{E,U}_t+\frac{R_t^{G,U}T_{DB}}{T_G}\leq M(1-u_t)\\
\label{zt_DW}
& -Mu_t\leq z_t+R^{E,D}_t-\frac{R_t^{G,D}T_{DB}}{T_G}\leq Mu_t\\
\label{yt_UP}
& -M(1-u_t)\leq y_t-\frac{R^{G,U}_t}{T_G}\leq M(1-u_t)\\
\label{yt_DW}
& -Mu_t\leq y_t-\frac{R^{G,D}_t}{T_G}\leq Mu_t\\
\label{Nonegative_PFR}
& R^{G,U}_t,R^{G,D}_t, R^{E,U}_t,R^{E,D}_t \geq 0\\
\label{xt_UP}
& -M(1-u_t)\leq x_t-\left(4\Delta f^{\max}H_t-R^{E,U}_tT_E\right)\leq M(1-u_t)\\
\label{xt_DW}
& -Mu_t\leq x_t-\left(4\Delta f^{\max}H_t-R^{E,D}_tT_E\right)\leq Mu_t\\
\label{Fnadir_SOC_reformulation}
&\left(p_t^{imb}-z_t\right)^2\leq x_ty_t
\end{align}
\end{subequations}
where we introduce a binary variable $u_t$ to describe the unknown direction of the post-islanding power imbalance, $M$ is a predefined positive big number, and $x_t$,  $y_t$, and $z_t$ are optimization variables introduced to enable the reformulation.

\subsubsection{Constraints on Quasi-Steady State} To stabilize frequency eventually after unscheduled islanding, the amount of total available PFRs is required to cover the power imbalance. Considering the unknown sign of $p_t^{imb}$, we formulate the quasi-steady state constraints as the following two-sided form:
\begin{align}
\label{QSS}
-R^{G,D}_t-R^{E,D}_t\leq p_t^{imb}\leq  R^{G,U}_t+ R^{E,U}_t.
\end{align}

\vspace{-0.2cm}
\section{DR-FCMS Formulation} \label{sec.DR-FCMS}
We first describe how we model RES uncertainty and the affinely adaptive decision rule. We then introduce how we account for uncertainty-related constraints, including frequency constraints, via a DRCC approach. Finally, we formulate the full DR-FCMS problem in the event of unscheduled islanding.

\vspace{-0.3cm}
\subsection{Network Representation}
The microgrid is represented by a graph $(\mathcal{N}, \mathcal{L})$, where $\mathcal{N}$ $\mathcal{L}$ are the sets of nodes and lines. 
We assume that the graph has a tree structure. The root node of the graph is indexed by 1, and $\mathcal{N}^{\dag}$=$\mathcal{N}\setminus\{1\}$ denotes the set of non-root nodes. The sets of DGs, RESs, BESSs, and loads are denoted as $\mathcal{G}$, $\mathcal{S}$, $\mathcal{B}$, and $\mathcal{D}$, respectively. 
We assume that there is one DG, one BESS, one RES, and one load at each \emph{non-root} node, such that $|\mathcal{N}^{\dag}|=|\mathcal{G}|=|\mathcal{S}|=|\mathcal{B}|=|\mathcal{D}|$. Non-root nodes without the DG, BESS, RES, or load can be handled by setting the corresponding parameters to zeros, and nodes with multiple entries can be addressed by a summation.

\vspace{-0.3cm}
\subsection{Modeling of System Responses under Uncertainty}\label{sec:modeling_system_response}

In this section, we first introduce the uncertainty model of active power forecasts. Then we adopt an affinely adaptive policy similar to~\cite{Roald-C-2016} for microgrids' responses to RES uncertainty via DGs, BESSs, power exchanges, and power flows. 

\subsubsection{RES Uncertainty}
The RES active power output is considered uncertain as forecasts contain random errors. Therefore, we model the available active power outputs of RESs by the sum of a forecast value and a random forecast error:
\begin{align}
\label{uncer_Renew}
\tilde{p}_{s,t}=p_{s,t}^{f}+\xi_{s,t}
\end{align}
where $\tilde{p}_{s,t}$, $p_{s,t}^f$, and $\xi_{s,t}$ are the available active power output, forecast value, and forecast error of RES.

RESs operate in the deloading mode to provide frequency support under an islanding event. The available power output is split into an actual power output, \(\tilde{p}_{s,t}^c\), and a portion to provide reserves for the virtual inertia response and PFR, \(\tilde{r}_{s,t}^c\)~\cite{Dreidy-RSER-2017}. We optimize the deloading ratio $\delta_{s,t}$:
\begin{subequations}
\begin{align}
\label{ReneP_curt}
&\tilde{p}_{s,t}^c=(1-\delta_{s,t})\tilde{p}_{s,t}=(1-\delta_{s,t})p_{s,t}^{f}+(1-\delta_{s,t})\xi_{s,t}\\
\label{Rene_injection}
&\tilde{r}_{s,t}^c=\delta_{s,t}\tilde{p}_{s,t}=\delta_{s,t}p_{s,t}^{f}+\delta_{s,t}\xi_{s,t}
\end{align}
\end{subequations} 

We assume a \emph{constant power factor} $\cos\phi$ for uncertain renewable energy injections and $\varphi=\sqrt{(1-\cos^2\phi)/(\cos^2\phi)}$, so we can model the actual reactive power injection by:
\begin{align}
\label{ReneQ_curt}
\tilde{q}_{s,t}^c=\varphi\tilde{p}_{s,t}^c=\varphi(1-\delta_{s,t})p_{s,t}^{f}+\varphi(1-\delta_{s,t})\xi_{s,t}
\end{align}

\subsubsection{DGs} Based on the affine policy, the actual active and reactive power outputs of DG $i$ are modeled by:
\begin{subequations}
\begin{align}
\label{DG_active_output}
&\tilde{g}_{i,t}^P=g_{i,t}^P-(\alpha_{i,t}^P)^{\text T}\xi_t\\
\label{DG_reactive_output}
&\tilde{g}_{i,t}^Q=g_{i,t}^Q-(\alpha_{i,t}^Q)^{\text T}\xi_t
\end {align}
\end{subequations}
where $\tilde{g}_{i,t}^P$/$\tilde{g}_{i,t}^Q$ are the actual active/reactive power outputs in the presence of uncertainty, and $g_{i,t}^P$/$g_{i,t}^Q$ are the nominal active/reactive power outputs in the absence of uncertainty, i.e., $\xi_t=0$. The participation factor vectors $\alpha_{i,t}^P/\alpha_{i,t}^Q\in \mathbb{R}^{|\mathcal{S}|\times 1}_+$ are subject to optimization whose entries $\alpha_{i,s,t}^P$/$\alpha_{i,s,t}^Q$ describe the reactions of DG $i$ to uncertain forecast errors of RES $s$.

\subsubsection{BESSs} To balance the uncertain active power forecast errors of RESs, the BESS is controlled to adjust its charging or discharging power with an affine policy:
\begin{subequations}
\begin{align}
\label{Actual_chargePower}
&\tilde{p}_{b,t}^{ch}=p_{b,t}^{ch}+(\beta_{b,t}^{ch})^{\text T}\xi_t\\
\label{Actual_dischargePower}
&\tilde{p}_{b,t}^{dch}=p_{b,t}^{dch}-(\beta_{b,t}^{dch})^{\text T}\xi_t
\end{align}
\end{subequations}
where $\tilde{p}_{b,t}^{ch}$/$\tilde{p}_{b,t}^{dch}$ are the actual charging/discharge power of BESSs and $p_{b,t}^{ch}$/$p_{b,t}^{dch}$ are the nominal charging/discharging power corresponding to $\xi_t=0$. The participation factor vectors $\beta_{b,t}^{ch}/\beta_{b,t}^{dch}\in \mathbb{R}^{|\mathcal{S}|\times 1}_+$ are subject to optimization. 

\subsubsection{Power Exchanges} We model the actual power exchanges between the microgrid and main grid by:
\begin{subequations}
\begin{align}
\label{Actual_active_output_exchange}
&\tilde{p}_{t}^{ex}=p_{t}^{ex}-(\gamma_{t}^P)^{\text T}\xi_t\\
\label{Actual_reactive_output_exchange}
&\tilde{q}_{t}^{ex}=q_{t}^{ex}-(\gamma_{t}^Q)^{\text T}\xi_t
\end {align}
\end{subequations}
where $\tilde{p}_{t}^{ex}$/$\tilde{q}_{t}^{ex}$ are the actual active/reactive power exchanges corresponding to $\xi_t\neq0$ while the nominal power exchanges $p_{t}^{ex}$/$q_{t}^{ex}$ correspond to $\xi_t=0$. The participant factor vectors $\gamma_{t}^P/\gamma_{t}^Q\in \mathbb{R}^{|\mathcal{S}|\times 1}_+$ are subject to optimization.

\subsubsection{Line Power Flows} The actual active and reactive power flows across line $ij$ are modeled by:
\begin{align}
\label{Actual_active_Lineflow}
\tilde{f}_{ij,t}^{P}=f_{ij,t}^{P}+(\chi_{ij,t}^P)^{\text T}\xi_t\\ 
\label{Actual_reactive_Lineflow}
\tilde{f}_{ij,t}^{Q}=f_{ij,t}^{Q}+(\chi_{ij,t}^Q)^{\text T}\xi_t
\end {align}
where $\tilde{f}_{ij,t}^P$/$\tilde{f}_{ij,t}^Q$ are the actual active/reactive power flows under uncertainty, and $f_{ij,t}^P$/$f_{ij,t}^Q$ are the nominal active/reactive line power flow set points. The participation factor vectors $\chi_{ij,t}^P/\chi_{ij,t}^Q\in \mathbb{R}^{|\mathcal{S}|\times 1}$ are subject to optimization.

\subsubsection{Voltage Magnitudes} The actual squared voltage magnitude is modeled by:
\begin{align}
\label{Actual_voltage}
\tilde{V}_{i,t}=V_{i,t}+\pi_{i,t}^{\text T}\xi_t
\end {align}
where $\tilde{V}_{i,t}/V_{i,t}$ is the actual/nominal squared voltage magnitude and we optimize the participation factor $\pi_{i,t}\in \mathbb{R}^{|\mathcal{S}|\times 1}$.

\vspace{-0.3cm}
\subsection{Objective Function}
The objective function of the grid-connected FCMS under uncertainty is to minimize the total cost:
\begin{align}
\label{obj}
\nonumber & \min \sum_{t\in\mathcal{T}}\sum_{i\in \mathcal{G}}\left(c_i^{SU}z_{i,t}^u+c_i^{SD}z_{i,t}^d+c_i^{NO}x_{i,t}+c_i\sup_{\mathbb{P}\in\mathcal{P}_t} \mathbb{E}_{\mathbb{P}} \left[\tilde{g}_{i,t}^P\right]\right.\\
\nonumber &\left.+c_i^{G,R}(R_{i,t}^{G, U}+R_{i,t}^{G, D})\right)+\sum_{t\in\mathcal{T}}\sum_{s\in\mathcal{S}}\left(c_s^{S,IR}IR_{s,t}^{S,U}+c_s^{S,R}R_{s,t}^{S,U}\right)\\
\nonumber & \sum_{t\in\mathcal{T}}\sum_{b\in\mathcal{B}} \left(c_b^{B}\sup_{\mathbb{P}\in\mathcal{P}_t} \mathbb{E}_{\mathbb{P}}\left [\tilde{p}_{b,t}^{ch}+\tilde{p}_{b,t}^{dch}\right]+c_b^{B,IR}(IR_{b,t}^{B,U}+IR_{b,t}^{B, D})\right.\\
&\left. +c_b^{B,R}(R_{b,t}^{B,U}+R_{b,t}^{B, D})\right)+\sum_{t\in\mathcal{T}}\lambda_t \sup_{\mathbb{P}\in\mathcal{P}_t} \mathbb{E}_{\mathbb{P}}\left[\tilde{p}_{t}^{ex}\right]
\end{align}
where the first five terms represent the start-up, shut-down, no-load, expected generation, and DG PFR reserve costs. The sixth and seventh terms represent inertia response (IR) and RES PFR reserve costs. The eighth to tenth terms are the expected charging/discharging, IR, and BESS PFR reserve costs. The last term represents the expected cost of power exchange with the main grid, where $\lambda_t$ is the unit power exchange price. 
Note that we compute the expected cost against the worst-case distribution of $\xi_t$ within an ambiguity set $\mathcal{P}_t$, which is detailed in Section IV-A.

\vspace{-0.4cm}
\subsection{Microgrid Normal Operation Constraints}
We adopt the \emph{LinDistFlow} model in \cite{Baran-TPD-2016} to describe the microgrid. The operational constraints of microgrid based on the \emph{LinDistFlow} setting are introduced as follows.

\subsubsection{Constraints on Network Power Flows}
We describe the power flow constraints of the microgrid as follows, where we omit $``\forall t\in \mathcal{T}"$ for simplicity:
\begin{subequations}
\begin{align}
\label{Active_Power_Balance_Root_node}
&\tilde{p}_{t}^{ex}=\sum_{j\in \mathcal{L}_i}\tilde{f}_{ij,t}^P, \forall i\in \{1\}\\
\label{Reactive_Power_Balance_Root_node}
&\tilde{q}_{t}^{ex}=\sum_{j\in \mathcal{L}_i}\tilde{f}_{ij,t}^Q, \forall i\in \{1\}\\
\label{Active_Power_Balance_non_Root_node}
&\tilde{g}_{i,t}^P+\tilde{p}_{i,t}^{dch}-\tilde{p}_{i,t}^{ch}+\tilde{p}_{i,t}^c-d_{i,t}^P=\sum_{j\in \mathcal{L}_i}\tilde{f}_{ij,t}^P, \forall i\in \mathcal{N}^{\dag}  \\
\label{Reactive_Power_Balance_non_Root_node}
&\tilde{g}_{i,t}^Q+\tilde{q}_{i,t}^c-d_{i,t}^Q=\sum_{j\in \mathcal{L}_i}\tilde{f}_{ij,t}^Q, \forall i\in \mathcal{N}^{\dag}\\
\label{voltage_difference}
&\tilde{V}_{i,t}=\tilde{V}_{j,t}-2(R_{ij}\tilde{f}_{ij,t}^P+X_{ij}\tilde{f}_{ij,t}^Q), \forall ij\in \mathcal{L}
\end{align}
\end{subequations}
where (\ref{Active_Power_Balance_Root_node}) and (\ref{Reactive_Power_Balance_Root_node}) are the active and reactive power balance equations at the root node while  (\ref{Active_Power_Balance_non_Root_node}) and (\ref{Reactive_Power_Balance_non_Root_node}) are the active and reactive power balance equations at each non-root node. Constraint (\ref{voltage_difference}) represents the voltage drop equation across each line. 


By separating the uncertainty-independent and uncertainty-dependent parts, constraints (\ref{Active_Power_Balance_Root_node})-(\ref{voltage_difference}) are equivalent to enforcing the following constraints:

{\small
\begin{subequations}
\begin{align}
\label{Active_Power_Balance_Root_node_nominal}
&p_{t}^{ex}=\sum_{j\in \mathcal{L}_i}f_{ij,t}^P, \forall i\in \{1\}\\
\label{Reactive_Power_Balance_Root_node_nominal}
&q_{t}^{ex}=\sum_{j\in \mathcal{L}_i}f_{ij,t}^Q, \forall i\in \{1\}\\
\label{Active_Power_Balance_non_Root_node_nominal}
&g_{i,t}^P+p_{i,t}^{dch}-p_{i,t}^{ch}+(1-\delta_{i,t})p_{i,t}^{f}-d_{i,t}^{P}=\sum_{j\in \mathcal{L}_i}f_{ij,t}^P, \forall i\in \mathcal{N}^{\dag}\\
\label{Reactive_Power_Balance_non_Root_node_nominal}
&g_{i,t}^Q+\varphi(1-\delta_{i,t})p_{i,t}^{f}-d_{i,t}^{Q}=\sum_{j\in \mathcal{L}_i}f_{ij,t}^Q, \forall i\in \mathcal{N}^{\dag}\\
\label{voltage_difference_nominal}
&V_{i,t}=V_{j,t}-2(R_{ij}f_{ij,t}^P+X_{ij}f_{ij,t}^Q), \forall ij\in \mathcal{L}\\
\label{Active_Power_Balance_Root_node_real}
&-\gamma_{t}^{P}=\sum_{j\in \mathcal{L}_i}\chi_{ij,t}^P, \forall i\in \{1\}\\
\label{Reactive_Power_Balance_Root_node_real}
&-\gamma_{t}^{Q}=\sum_{j\in \mathcal{L}_i}\chi_{ij,t}^Q, \forall i\in \{1\}\\
\label{Active_Power_Balance_non_Root_node_real}
&-\alpha_{i,t}^P-\beta_{i,t}^{dch}-\beta_{i,t}^{ch}+(1-\delta_{i,t})\Lambda_{i}=\sum_{j\in \mathcal{L}_i}\chi_{ij,t}^P, \forall i\in \mathcal{N}^{\dag}\\
\label{Reactive_Power_Balance_non_Root_node_real}
&-\alpha_{i,t}^Q+\varphi(1-\delta_{i,t})\Lambda_{i}=\sum_{j\in \mathcal{L}_i}\chi_{ij,t}^Q, \forall i\in \mathcal{N}^{\dag}\\
\label{voltage_difference_real}
&\pi_{i,t}=\pi_{j,t}-2(R_{ij}\chi_{ij,t}^P+X_{ij}\chi_{ij,t}^Q), \forall ij\in \mathcal{L}
\end{align}
\end{subequations}}

\noindent where $\Lambda_i \in \mathbb{R}^{|\mathcal{N}^\dag|\times 1}$ is a vector with the $i$-th element being 1 while others being zero. Constraints (\ref{Active_Power_Balance_Root_node_nominal})-(\ref{voltage_difference_nominal}) and (\ref{Active_Power_Balance_Root_node_real})-(\ref{voltage_difference_real}) enforce the power balance and voltage drop equations. 

\subsubsection{Constraints for DGs} We omit $``\forall i\in \mathcal{G}, t\in \mathcal{T}"$ in each constraint on DG for simplicity. Constraints (\ref{logicrelation}) and (\ref{ONOFFBinary}) describe the logic relationship between DG's start-up/shut-down status ($z_{i,t}^u$/$z_{i,t}^d$) and ON/OFF status ($x_{i,t}$). Constraints (\ref{minON}) and (\ref{minOFF}) are the minimum-up and minimum-down time limits. DR chance constraints (\ref{CCgeneUPLIMIT_active}) and (\ref{CCgeneDWLIMIT_active}) ensure that the sum of the actual active power output of DG $i$ and the PFR reserves ($R_{i,t}^{G,U}$/$R_{i,t}^{G,D}$) is within a specified range with a probability at least $1-\epsilon_G$ for all distributions in the ambiguity set $\mathcal{P}_t$. 
Constraints (\ref{DG_UP_FR_reserve_limit}) and (\ref{DG_DW_FR_reserve_limit}) impose limits on upward and downward PFR reserve capacities. Constraints (\ref{CCgeneUPLIMIT_reactive}) and (\ref{CCgeneDWLIMIT_reactive}) are the DRCC for the actual reactive power output bounds of DG $i$ and constraints (\ref{CCRampingUP_active}) and (\ref{CCRampingDOWN_active}) impose probabilistic ramping limits of DG $i$. 

\vspace{-0.2cm}

{\small
\begin{subequations}
\label{generatorsUC}
\begin{align}
\label{logicrelation}
&z_{i,t}^u-z_{i,t}^d=x_{i,t}-x_{i,t-1}\\
\label{ONOFFBinary}
&z_{i,t}^u, z_{i,t}^d, x_{i,t} \in \{0,1\}\\
\label{minON}
&\sum_{\bar{t}=\max\{1,t-T_i^{on}+1\}}^t z_{i,\bar{t}}^u \leq x_{i,\bar{t}}\\
\label{minOFF}
&\sum_{\bar{t}=\max\{1,t-T_i^{off}+1\}}^t z_{i,\bar{t}}^d \leq 1-x_{i,\bar{t}}\\
\label{CCgeneUPLIMIT_active}
&\inf_{\mathbb{P}\in\mathcal{P}_t}\mathbb{P}\left\{g_{i,t}^P-(\alpha_{i,t}^P)^{\text T}\xi_t +R_{i,t}^{G,U} \leq g_{i}^{P,\max}x_{i,t}\right\}\geq 1-\epsilon_{G}\\
\label{CCgeneDWLIMIT_active}
&\inf_{\mathbb{P}\in\mathcal{P}_t}\mathbb{P}\left\{g_{i,t}^P-(\alpha_{i,t}^P)^{\text T}\xi_t -R_{i,t}^{G,D} \geq g_{i}^{P,\min}x_{i,t}\right\}\geq 1-\epsilon_{G}\\
\label{DG_UP_FR_reserve_limit}
&0\leq R_{i,t}^{G,U}\leq R_{i,t}^{G,U,\max}u_t\\
\label{DG_DW_FR_reserve_limit}
&0\leq R_{i,t}^{G,D}\leq R_{i,t}^{G,D,\max}(1-u_t)\\
\label{CCgeneUPLIMIT_reactive}
&\inf_{\mathbb{P}\in\mathcal{P}_t}\mathbb{P}\left\{g_{i,t}^Q-(\alpha_{i,t}^Q)^{\text T}\xi_t \leq g_{i}^{Q,\max}x_{i,t}\right\}\geq 1-\epsilon_{G}\\
\label{CCgeneDWLIMIT_reactive}
&\inf_{\mathbb{P}\in\mathcal{P}_t}\mathbb{P}\left\{g_{i,t}^Q-(\alpha_{i,t}^Q)^{\text T}\xi_t \geq g_{i}^{Q,\min}x_{i,t}\right\}\geq 1-\epsilon_{G}\\
\label{CCRampingUP_active}
&\inf_{\mathbb{P}\in\mathcal{P}_t}\mathbb{P}\left\{\tilde{g}_{i,t}^P-\tilde{g}_{i,t-1}^P \leq r^U_i x_{i,t-1}+S_i^U z_{i,t}^u\right\}\geq 1-\epsilon_{G}\\
\label{CCRampingDOWN_active}
&\inf_{\mathbb{P}\in\mathcal{P}_t}\mathbb{P}\left\{\tilde{g}_{i,t-1}^P-\tilde{g}_{i,t}^P \leq r^D_i x_{i,t}+S_i^D z_{i,t}^d\right\}\geq 1-\epsilon_{G}.
\end{align}
\end{subequations}}

\subsubsection{Constraints for BESSs} 
For a BESS, constraints (\ref{CC_charging}) and (\ref{CC_discharging}) ensure that the sum of the charging and discharging power, upward/downward IR reserves ($IR_{b,t}^{B,U}$/$IR_{b,t}^{B,D}$) and PFR reserves ($R_{b,t}^{B,U}$/$R_{b,t}^{B,D}$) is within the limits with a probability at least $1-\epsilon_B$, for any distribution within $\mathcal{P}_t$. Constraint (\ref{Charge_discharge}) ensures that the charging and discharging behaviors cannot simultaneously happen. Constraint~(\ref{SoC}) models the state of charge level transition dynamics of a BESS. The upward and downward IR reserves are indicated by constraints (\ref{Batteries_up_IR_reserve_limit}) and (\ref{Batteries_down_IR_reserve_limit}). The virtual inertia provided by BESSs is restricted by constraint (\ref{Hbess_limit}). 
Constraints on upward and downward PFR reserves are indicated by constraints (\ref{Batteries_up_FR_reserve_limit}) and (\ref{Batteries_down_FR_reserve_limit}). 
Constraint (\ref{SoCLimit}) imposes limits on the state of charge level in a BESS. Constraint (\ref{SOCFinalequalInitial}) requires that each BESS' state of charge at the final period recovers its initial level. 

{\small
\begin{subequations}
\label{generators}
\begin{align}
\label{CC_charging}
\nonumber &\inf_{\mathbb{P}\in\mathcal{P}_t}\mathbb{P}\left\{p_{b,t}^{ch}+(\beta_{b,t}^{ch})^{\text T}\xi_t +IR_{b,t}^{B,D}+R_{b,t}^{B,D}\leq u_{b,t}^{ch}p_b^{ch,\max}\right\}\\& \qquad \geq 1-\epsilon_{B}\\
\label{CC_discharging}
\nonumber &\inf_{\mathbb{P}\in\mathcal{P}_t}\mathbb{P}\left\{p_{b,t}^{dch}-(\beta_{b,t}^{dch})^{\text T}\xi_t+IR_{b,t}^{B,U}+R_{b,t}^{B,U}\leq u_{b,t}^{dch}p_b^{dch,\max}\right\}\\& \qquad \geq 1-\epsilon_{B}\\
\label{Charge_discharge}
&u_{b,t}^{ch}+u_{b,t}^{dch}=1, u_{b,t}^{ch}/ u_{b,t}^{dch} \in \left\{0,1\right\}\\
\label{SoC}
&E_{b,t}=E_{b,t-1}+\eta^{ch}_bp_{b,t}^{ch}-p_{b,t}^{dch}/\eta^{dch}_b\\
\label{Batteries_up_IR_reserve_limit}
&IR_{b,t}^{B,U}=2H_{b,t}^Bp_b^{dch,\max}RoCoF^{\max}(1-u_t)\\
\label{Batteries_down_IR_reserve_limit}
&IR_{b,t}^{B,D}=2H_{b,t}^Bp_b^{ch,\max}RoCoF^{\max}u_t\\
\label{Hbess_limit}
&H_{b}^{B,\min}\leq H_{b,t}^B\leq H_{b}^{B,\max}\\
\label{Batteries_up_FR_reserve_limit}
&0\leq R_{b,t}^{B,U}\leq Mu_t\\
\label{Batteries_down_FR_reserve_limit}
&0\leq R_{b,t}^{B,D}\leq M(1-u_t)\\
\label{SoCLimit}
&E_b^{\min}\leq E_{b,t}\leq E_b^{\max}\\
\label{SOCFinalequalInitial}
&E_{b,T}=E_{b,0}.
\end{align}
\end{subequations}}

\subsubsection{Constraints for RESs} 
Constraint (\ref{CC_Rene_curt}) ensures the allocated reserve \(\tilde{r}_{s,t}^c\) is sufficient for upward IR/PFR reserves with a probability at least $1-\epsilon_S$ for all distributions within $\mathcal{P}_{s,t}$ of random variable $\xi_{s,t}$. Constraints (\ref{RES_up_IR_reserve_limit}) and (\ref{RES_up_FR_reserve_limit}) connect the upward IR and PFR reserves. The virtual inertia RESs provide is bounded by constraint (\ref{Hres_limit}). Constraint (\ref{Rcurt_Ratio}) imposes limits on the deloading ratio. 
\begin{subequations}
\begin{align}
\label{CC_Rene_curt}
&\inf_{\mathbb{P}\in\mathcal{P}_{s,t}}\mathbb{P}\left\{IR_{s,t}^{S,U}+R_{s,t}^{S,U} \leq \tilde{r}_{s,t}^c\right\}\geq 1-\epsilon_S\\
\label{RES_up_IR_reserve_limit}
&IR_{s,t}^{S,U}=2H_{s,t}^Sp_s^{\max}RoCoF^{\max}(1-u_t)\\
\label{RES_up_FR_reserve_limit}
&0\leq R_{s,t}^{S,U} \leq Mu_t\\
\label{Hres_limit}
&H_{s}^{S,\min}\leq H_{s,t}^S\leq H_{s}^{S,\max}\\
\label{Rcurt_Ratio}
&0\leq\delta_{s,t}\leq \delta_{s}^{\max}
\end{align}
\end{subequations}

\subsubsection{Constraints for Line Power Flows} 
We formulate the uncertain line power flows as a DR quadratic chance constraint:
\begin{align}
\label{CC_Lineflow}
\inf_{\mathbb{P}\in\mathcal{P}_t}\mathbb{P}\left\{(\tilde{f}_{ij,t}^{P})^2 +(\tilde{f}_{ij,t}^{Q})^2\leq (f_{ij}^{\max}) ^2\right\}\geq 1-\epsilon_{L}
\end{align}
where $f_{ij}^{\max}$ is the transmission capacity of line $ij$.

\subsubsection{Constraints for Voltage Magnitudes} 
The following DR chance constraints formulate the upper and lower bounds on the squared voltage magnitudes under uncertainty:
\begin{subequations}
\label{CC_Voltage}
\begin{align}
\label{CC_Voltage_upper}
&\inf_{\mathbb{P}\in\mathcal{P}_t}\mathbb{P}\left\{\tilde{V}_{i,t}\leq (V^{\max})^2 \right\}\geq 1-\epsilon_{V}\\
\label{CC_Voltage_lower}
&\inf_{\mathbb{P}\in\mathcal{P}_t}\mathbb{P}\left\{\tilde{V}_{i,t}\geq (V^{\min})^2 \right\}\geq 1-\epsilon_{V}.
\end{align}
\end{subequations}

\subsubsection{Constraints for Power Exchanges} The transmission capacity of the substation connected to the main grid limits the power exchange between the microgird and the main grid:
\begin{align}
\label{CC_substation}
\inf_{\mathbb{P}\in\mathcal{P}_t}\mathbb{P}\left\{(\tilde{p}_{t}^{ex})^2 +(\tilde{q}_{t}^{ex})^2\leq (S^{\max}) ^2\right\}\geq 1-\epsilon_{E}.
\end{align}

\vspace{-0.4cm}
\subsection{Frequency Constraints involving Uncertainty}
Section \ref{sec.Frequency dynamics} presents the frequency constraints when the power imbalance is realized after unscheduled islanding. When we consider the RES uncertainty, the post-islanding power imbalance \(p_t^{imb} = p_{t}^{ex}-(\gamma_{t}^P)^{\text T}\xi_t\) becomes a random variable. Therefore, we need to embed the uncertainty in those frequency constraints and formulate the following DR chance constraints in order to ensure robustness.

%
\vspace{-0.2cm}
\begin{subequations}
\label{FAIC}
\begin{align}
\label{PowerImbalanceDirectionCCUP}
&\inf_{\mathbb{P}\in\mathcal{P}_t}\mathbb{P}\left\{p_t^{imb}\leq Mu_t \right\}\geq 1-\epsilon_F\\
\label{PowerImbalanceDirectionCCDOWN}
&\inf_{\mathbb{P}\in\mathcal{P}_t}\mathbb{P}\left\{p_t^{imb}\geq-M(1-u_t) \right\}\geq 1-\epsilon_F\\
\label{CC_RoCoFconstraintUP}
&\inf_{\mathbb{P}\in\mathcal{P}_t}\mathbb{P}\left\{p_t^{imb}\leq 2H_t \cdot RoCoF^{\max} \right\} \geq 1-\epsilon_F\\
\label{CC_RoCoFconstraintDOWN}
&\inf_{\mathbb{P}\in\mathcal{P}_t}\mathbb{P}\left\{p_t^{imb}\geq -2H_t \cdot RoCoF^{\max} \right\} \geq 1-\epsilon_F\\
\label{CC_Fnadirconstraint}
&\inf_{\mathbb{P}\in\mathcal{P}_t}\mathbb{P}\left\{\left(p_t^{imb}-z_t\right)^2\leq x_ty_t\right\} \geq 1-\epsilon_F\\
\label{CC_QSSUP}
&\inf_{\mathbb{P}\in\mathcal{P}_t}\mathbb{P}\left\{p_t^{imb}\leq  R^{G,U}_t+ R^{E,U}_t\right\}\geq 1-\epsilon_F\\
\label{CC_QSSDOWN}
&\inf_{\mathbb{P}\in\mathcal{P}_t}\mathbb{P}\left\{p_t^{imb}\geq -R^{G,D}_t-R^{E,D}_t\right\}\geq 1-\epsilon_F.
\end{align}
\end{subequations}

\subsection{Overall Formulation of DR-FCMS}
Before presenting the solution method for DR-FCMS, we summarize the overall DR-FCMS formulation as follows:
\begin{align*}
& \mbox{(DR-FCMS)} \;\; \min \ \eqref{obj}\\
& \text {s.t. \eqref{zt_UP}-\eqref{Nonegative_PFR},  {(\ref{uncer_Renew})-(\ref{Actual_voltage})}, (\ref{Active_Power_Balance_Root_node_nominal})-(\ref{voltage_difference_nominal}), (\ref{Active_Power_Balance_Root_node_real})-(\ref{voltage_difference_real}), (\ref{logicrelation})-(\ref{CCRampingDOWN_active}),}\\
&\text{(\ref{CC_charging})-(\ref{SOCFinalequalInitial}), (\ref{CC_Lineflow}), (\ref{CC_Voltage_upper}), (\ref{CC_Voltage_lower}), (\ref{CC_substation}), and (\ref{PowerImbalanceDirectionCCUP})-(\ref{CC_QSSDOWN})}.
\end{align*}

\section{Solution Methodology} \label{sec.Methods}
Model (DR-FCMS) cannot be readily solved by off-the-shelf solvers due to: 
\begin{enumerate}[label=\arabic*)]
    \item worst-case expected cost terms in~\eqref{obj};
    \item DR chance constraints (\ref{CCgeneUPLIMIT_active}), (\ref{CCgeneDWLIMIT_active}), (\ref{CCgeneUPLIMIT_reactive})-(\ref{CCRampingDOWN_active}), (\ref{CC_charging}), (\ref{CC_discharging}), (\ref{CC_Rene_curt}), (\ref{CC_Lineflow}), (\ref{CC_Voltage_upper}), (\ref{CC_Voltage_lower}), (\ref{CC_substation}), and (\ref{PowerImbalanceDirectionCCUP})-(\ref{CC_QSSDOWN});
    \item bilinear terms in (\ref{Batteries_up_IR_reserve_limit}), (\ref{Batteries_down_IR_reserve_limit}), and (\ref{RES_up_IR_reserve_limit}). 
\end{enumerate}

 In this section, we propose detailed methodologies to address the issues above. We first specify the ambiguity set for describing the RES uncertainty $\xi_t$ and reformulate the worst-case expected cost terms and DR linear chance constraints. There is no straightforward reformulation for the DR quadratic chance constraint~(\ref{CC_Lineflow}), (\ref{CC_substation}), and (\ref{CC_Fnadirconstraint}). Thus, we convert the DR quadratic chance constraints into a set of DR linear chance constraints so that they can be further reformulated to a set of SOC constraints. Subsequently, we linearize the bilinear terms such that we can obtain a computationally tractable mixed-integer convex program for (DR-FCMS).

\vspace{-0.3cm}
\subsection{Wasserstein-metric Ambiguity Set}
A proper selection of ambiguity set for the uncertain forecast error of RES $\xi_t \in \mathbb{R}^{|\mathcal{S}|\times 1}$ is crucial when we reformulate the objective function and DR chance constraints in a realistic and tractable manner. We consider a Wasserstein-metric ambiguity set that defines a ball of radius $\theta \geq 0$ in the distribution space around a prescribed reference distribution:
\begin{align}
\label{Wassambiguityset}
\mathcal{P}_t=\left\{\mathbb{P} \in \mathcal{P}_0: d_W(\mathbb{P},\hat{\mathbb{P}}) \leq \theta \right\}
\end{align}
where $\mathcal{P}_0$ is the set of all probability distributions on $\mathbb{R}^{|\mathcal{S}|}$. The Wasserstein distance 
$d_W(\mathbb{P}_1,\mathbb{P}_2)=\inf_{\mathbb{Q}\in \mathcal{Q}(\mathbb{P}_1,\mathbb{P}_2)}\mathbb{E}_\mathbb{Q}[||\xi_1-\xi_2||]$
where $||\cdot||$ is the general norm on $ \mathbb{R}^{|\mathcal{S}|}$ and $\mathcal{Q}(\mathbb{P}_1,\mathbb{P}_2)$ is the set of all joint distributions on $ \mathbb{R}^{|\mathcal{S}|}\times\mathbb{R}^{|\mathcal{S}|}$ with marginal distributions $\mathbb{P}_1$ and $\mathbb{P}_2$ that govern $\xi_1$ and $\xi_2$, respectively.

Given $N$ historical observations of uncertainty $\xi_t$, i.e., 
$\{\hat{\xi_t^n}\}_{n=1}^N$, a natural choice for $\hat{\mathbb{P}}$ is the empirical distribution $\hat{\mathbb{P}}=\frac{1}{N}\sum_{n=1}^N\delta_{\hat{\xi_t^n}}$, where $\delta_{\hat{\xi_t^n}}$ is the Dirac distribution centered on $\hat{\xi_t^n}$. With this setting, the complexity of the reformulation counterpart for DR chance constraints will rise sharply in the sample size, affecting the scalability \cite{Mohajerin-MP-2018}. To address this, another choice is to use an elliptical distribution as the reference, where the mean and covariance information of the elliptical distribution is estimated from historical observations~\cite{Kuhn-ORMS-2019}. In the literature on power system operations, the Gaussian distribution is widely used to model the forecast errors of RESs \cite{Qi-TSG-2023,ROALD-TPS-2017}, and an elliptical distribution is a generalization of a Gaussian distribution.

Therefore, we consider an elliptical reference distribution denoted as $\hat{\mathbb{P}}=\mathbb{P}_{(\mu_t,\Sigma_t,g)}$ with mean $\mu_t\in \mathbb{R}^{|\mathcal{S}|\times 1}$  and  positive-definite covariance matrix $\Sigma_t\in \mathbb{R}^{|\mathcal{S}|\times|\mathcal{S}|}$ whose probability density function is $f(\xi_t)=k\cdot g\left(\frac{1}{2}(\xi_t-\mu_t)^{\text T}\Sigma_t^{-1}(\xi_t-\mu_t) \right)$ where $k$ is a positive normalization scalar and $g(\cdot)$ is the density generator. 
%
%
As for the Wasserstein distance in $\mathcal{P}_t$, we adopt the Mahalanobis norm associated with the positive definite matrix $\Sigma_t$, defined as $||x||=\sqrt{x^{\text T}\Sigma_t^{-1}x}$. Such choices on reference distribution and Wasserstein distance will lead to the following tractable reformulation and approximation for the worst-case expectation function and DR chance constraints.

\vspace{-0.3cm}
\subsection{Reformulation of Objective Function}
The worst-case expected cost in the objective function (\ref{obj}) can be written in a compact form of $\sup_{\mathbb{P}\in \mathcal{P}_t}\mathbb{E}_\mathbb{P}\left [c(x_t)^{\text T}\xi_t\right]$, where $c(x_t)\in \mathbb{R}^{|\mathcal{S}|\times 1}$ is an affine function on the optimization variable $x_t$ and $\mathcal{P}_t $ is the ambiguity set defined in (\ref{Wassambiguityset}). 

\begin{proposition}
    \(\sup_{\mathbb{P}\in \mathcal{P}_t}\mathbb{E}_\mathbb{P}\left [c(x_t)^{\text T}\xi_t\right]\) with the ambiguity set in (\ref{Wassambiguityset}) and an elliptical reference distribution \(\hat{\mathbb{P}}=\mathbb{P}_{(\mu_t,\Sigma_t,g)}\) is equivalent to:
    \begin{align*}
    \mathbb{E}_{\hat{\mathbb{P}}}\left [c(x_t)^{\text T}\xi_t\right]+\theta||c(x_t)||_{\Sigma_t^{-1}}
    \end{align*}
    where $\hat{\mathbb{P}}=\mathbb{P}_{(\mu_t,\Sigma_t,g)}$ and $||c(x_t)||_{\Sigma_t^{-1}}$ is the Mahalanobis norm associated with inverse matrix $\Sigma_t^{-1}$.
\end{proposition}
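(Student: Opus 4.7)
The plan is to exploit the linearity of $\xi_t\mapsto c(x_t)^{\text T}\xi_t$ to obtain a closed form via a direct optimal-coupling argument, bypassing the heavier strong-duality machinery that would otherwise be needed for nonlinear losses. Since $c(x_t)$ is fixed when the inner supremum over $\mathbb{P}\in\mathcal{P}_t$ is evaluated, the task reduces to controlling how much the expectation of an affine function can move when the measure is perturbed within a Wasserstein ball of radius $\theta$ around the elliptical reference.

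First I would establish the upper bound. For any $\mathbb{P}\in\mathcal{P}_t$, the definition of the Wasserstein distance guarantees a coupling $\mathbb{Q}\in\mathcal{Q}(\mathbb{P},\hat{\mathbb{P}})$ with $\mathbb{E}_{\mathbb{Q}}[\|\xi_1-\xi_2\|]\le\theta$. Writing
\[
\mathbb{E}_{\mathbb{P}}[c(x_t)^{\text T}\xi]-\mathbb{E}_{\hat{\mathbb{P}}}[c(x_t)^{\text T}\xi]=\mathbb{E}_{\mathbb{Q}}[c(x_t)^{\text T}(\xi_1-\xi_2)]
\]
and applying the generalized H\"older inequality $c(x_t)^{\text T}(\xi_1-\xi_2)\le\|c(x_t)\|_{*}\,\|\xi_1-\xi_2\|$ inside the expectation yields the uniform upper bound $\mathbb{E}_{\hat{\mathbb{P}}}[c(x_t)^{\text T}\xi]+\theta\|c(x_t)\|_{*}$ over all $\mathbb{P}\in\mathcal{P}_t$, where $\|\cdot\|_{*}$ is the norm dual to the one defining $d_W$.

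Next I would show tightness by explicit construction. Let $v^{*}$ be a unit vector (in the primal norm) attaining $\|c(x_t)\|_{*}=\sup_{\|v\|\le 1}c(x_t)^{\text T}v$, and let $\mathbb{P}^{*}$ be the pushforward of $\hat{\mathbb{P}}$ under the translation $\xi\mapsto\xi+\theta v^{*}$. Coupling each sample with its translate gives $d_W(\mathbb{P}^{*},\hat{\mathbb{P}})\le\theta$, so $\mathbb{P}^{*}\in\mathcal{P}_t$, and by linearity $\mathbb{E}_{\mathbb{P}^{*}}[c(x_t)^{\text T}\xi]=\mathbb{E}_{\hat{\mathbb{P}}}[c(x_t)^{\text T}\xi]+\theta\, c(x_t)^{\text T}v^{*}=\mathbb{E}_{\hat{\mathbb{P}}}[c(x_t)^{\text T}\xi]+\theta\|c(x_t)\|_{*}$, matching the upper bound.

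Finally I would identify the dual norm for the Mahalanobis case. Since the primal norm is $\|x\|=\sqrt{x^{\text T}\Sigma_t^{-1}x}$, a short Lagrangian computation (or the general fact that the dual of the $A$-weighted Euclidean norm equals the $A^{-1}$-weighted one for $A\succ 0$) gives $\|y\|_{*}=\sqrt{y^{\text T}\Sigma_t\,y}$, which under the paper's convention is exactly $\|c(x_t)\|_{\Sigma_t^{-1}}$. Substituting completes the reformulation. The main obstacle is notational rather than mathematical: one must track the swap between $\Sigma_t$ and $\Sigma_t^{-1}$ when passing from the primal to the dual norm under the paper's Mahalanobis convention. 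Finiteness of the first moment of the elliptical reference $\hat{\mathbb{P}}$ with mean $\mu_t$ ensures every expectation above is well defined, and the argument never uses anything specific about the density generator $g(\cdot)$ beyond existence of $\mathbb{E}_{\hat{\mathbb{P}}}[\xi_t]$.
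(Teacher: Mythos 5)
Your proof is correct, but it takes a different route from the paper: the paper supplies no argument of its own and simply invokes Proposition 8 of Ruan \emph{et al.} (arXiv:2301.01045), specialized to the Mahalanobis norm, whereas you give a self-contained first-principles derivation. Your two-sided argument — the coupling/H\"older bound $\mathbb{E}_{\mathbb{Q}}[c(x_t)^{\text T}(\xi_1-\xi_2)]\le \|c(x_t)\|_{*}\,\mathbb{E}_{\mathbb{Q}}[\|\xi_1-\xi_2\|]$ for the upper estimate, and the translated pushforward $\xi\mapsto\xi+\theta v^{*}$ for attainment — is sound, and your dual-norm computation $\|y\|_{*}=\sqrt{y^{\text T}\Sigma_t y}$ for the primal norm $\|x\|=\sqrt{x^{\text T}\Sigma_t^{-1}x}$ is exactly the quantity the paper denotes $\|c(x_t)\|_{\Sigma_t^{-1}}$ (consistent with the $\sqrt{a^{\text T}\Sigma_t a}$ term appearing in Proposition 2's reformulation), so you resolved the notational ambiguity correctly. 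Your approach buys two things the citation does not make visible: (i) the result holds for \emph{any} reference distribution with finite first moment and any norm, so the elliptical structure and the density generator $g$ play no role in Proposition 1 (they matter only for Proposition 2); and (ii) the worst case is attained by an explicit shifted copy of $\hat{\mathbb{P}}$, which is useful intuition for the conservatism of the term $\theta\|c(x_t)\|_{*}$. The only points worth tightening are routine: either invoke existence of an optimal coupling (which holds on $\mathbb{R}^{|\mathcal{S}|}$ with a norm cost) or work with $\varepsilon$-optimal couplings and let $\varepsilon\downarrow 0$, and note that every $\mathbb{P}$ in a finite-radius type-1 Wasserstein ball around a distribution with finite first moment itself has a finite first moment, so all expectations are well defined.
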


Note that Proposition 1 is adapted from Proposition 8 in \cite{Ruan-arxiv-2023} by specifying the form of norm in Wasserstein-metric ambiguity set as the Mahalanobis norm.

\vspace{-0.4cm}
\subsection{Reformulations of DR Linear Chance Constraints}\label{subsec:LinearCC refor}
For ease of exposition, we consider a general single-sided DR linear chance constraint below
\begin{align}
\label{General_LinearCC}
\inf_{\mathbb{P}\in \mathcal{P}_t}\mathbb{P}\left\{a(x_t)^{\text T}\xi_t \leq b(x_t)\right\}\geq 1-\epsilon
\end{align}
where $a(x_t)\in \mathbb{R}^{|\mathcal{S}|\times 1}$ and $b(x_t)\in \mathbb{R}$ are affine functions. The parameter $\epsilon$ is a pre-specified violation probability.

\begin{proposition}
    If $\epsilon < 0.5$, the DR linear chance constraint (\ref{General_LinearCC}) under the ambiguity set defined in (\ref{Wassambiguityset}) can be reformulated as the following SOC constraint:
    \begin{align}
    \label{Reformulation_General_LinearCC}
     \Phi^{-1}(1-\epsilon')\sqrt{a(x_t)^{\text T}\Sigma_t a(x_t)}\leq b(x_t)-a(x_t)^{\text T}\mu_t
    \end{align}
    where $\epsilon'=1-\Phi(\eta^*)$ with $\eta^*$ being the minimal $\eta\geq1-\Phi^{-1}(1-\epsilon)$ that satisfies $\eta(\Phi(\eta)-(1-\epsilon))-\int_{\Phi^{-1}(1-\epsilon)^2/2}^{\eta^2/2}kg(z)dz\geq \theta$.
\end{proposition}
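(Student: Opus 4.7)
The plan is to rewrite~\eqref{General_LinearCC} in its tail form $\sup_{\mathbb{P}\in\mathcal{P}_t}\mathbb{P}\{a(x_t)^{\text T}\xi_t>b(x_t)\}\leq \epsilon$ and reduce the multivariate distributionally robust problem to a one-dimensional worst-case tail bound via the pushforward along the direction $a(x_t)$. Setting $Y=a(x_t)^{\text T}\xi_t$, the pushforward $\hat{\mathbb{P}}_Y$ of the elliptical reference $\hat{\mathbb{P}}=\mathbb{P}_{(\mu_t,\Sigma_t,g)}$ is a univariate elliptical distribution with mean $a(x_t)^{\text T}\mu_t$ and scale $\sigma:=\sqrt{a(x_t)^{\text T}\Sigma_t a(x_t)}$. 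A short Cauchy--Schwarz calculation using the Mahalanobis norm $\|\cdot\|_{\Sigma_t^{-1}}$ shows that the induced 1D Wasserstein distance satisfies $d_W(\mathbb{P}_Y,\hat{\mathbb{P}}_Y)\leq \sigma\,d_W(\mathbb{P},\hat{\mathbb{P}})\leq \sigma\theta$, so the projected problem is a 1D Wasserstein-robust tail bound with budget $\sigma\theta$ around a univariate elliptical reference.

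Next I would solve the reduced 1D problem: maximize $\mathbb{P}_Y\{Y>b(x_t)\}$ over all distributions within $W_1$-distance $\sigma\theta$ of the pushforward reference. Using the monotone-transport representation of $W_1$ on $\mathbb{R}$, the optimal adversary shifts mass rightward past the threshold $b(x_t)$; by a complementary-slackness argument, the worst-case tail probability equals $1-\Phi(\eta^*)$ for a critical standardized quantile $\eta^*$ chosen to exhaust the transport budget. Substituting the 1D elliptical density $f(y)=(k/\sigma)\,g\!\left(\tfrac{1}{2}\left((y-a(x_t)^{\text T}\mu_t)/\sigma\right)^2\right)$ into the budget equation and performing the change of variables $z=\tfrac{1}{2}\left((y-a(x_t)^{\text T}\mu_t)/\sigma\right)^2$ causes the scale $\sigma$ to cancel against $\sigma\theta$, leaving the stated implicit equation $\eta(\Phi(\eta)-(1-\epsilon))-\int_{\Phi^{-1}(1-\epsilon)^2/2}^{\eta^2/2}kg(z)\,dz\geq \theta$ that defines $\eta^*$. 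The worst-case tail bound then becomes $b(x_t)\geq a(x_t)^{\text T}\mu_t+\sigma\,\Phi^{-1}(1-\epsilon')$ with $\epsilon'=1-\Phi(\eta^*)$, which is exactly~\eqref{Reformulation_General_LinearCC}. The assumption $\epsilon<0.5$ ensures $\Phi^{-1}(1-\epsilon)>0$, so $\eta^*$ is well defined and the SOC constraint has the correct orientation.

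The main obstacle is the mass-transport analysis of the 1D worst-case tail probability: one must show that the supremum is attained by a specific adversarial distribution (a monotone rearrangement that shifts a thin strip of mass across $b(x_t)$), and then verify that the budget-exhaustion condition specializes cleanly to the integral involving the density generator $g$. The bookkeeping of the $\sigma$ factors during the change of variables is delicate, since the 1D budget $\sigma\theta$ inherited from the Mahalanobis projection must cancel exactly so that only $\theta$ survives on the right-hand side of the definition of $\eta^*$. Once this 1D characterization is in place, invoking it with the pushforward parameters $(a(x_t)^{\text T}\mu_t,\sigma)$ is routine; alternatively, one can specialize the general Wasserstein-DRCC reformulation in \cite{Ruan-arxiv-2023} to the Mahalanobis-norm, elliptical-reference setting, analogously to how Proposition~1 is obtained.
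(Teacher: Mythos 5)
Your proposal takes a genuinely different route from the paper, because the paper does not prove Proposition~2 at all: it states the result as an immediate consequence of Theorem~4.8 of \cite{Chen-MF-2021} and Proposition~2 of \cite{Ruan-arxiv-2023} and defers the proof to those references. What you sketch is, in effect, the derivation underlying those citations, and its core computations are correct: the Cauchy--Schwarz bound $|a(x_t)^{\text T}v|\leq \sigma\,\|v\|_{\Sigma_t^{-1}}$ with $\sigma=\sqrt{a(x_t)^{\text T}\Sigma_t a(x_t)}$ contracts the Wasserstein budget to $\sigma\theta$ for the pushforward $Y=a(x_t)^{\text T}\xi_t$, and the standardization $s=(y-a(x_t)^{\text T}\mu_t)/\sigma$ does cancel $\sigma$ so that only $\theta$ survives in the inequality defining $\eta^*$ (note that $\eta(\Phi(\eta)-(1-\epsilon))-\int_{\Phi^{-1}(1-\epsilon)^2/2}^{\eta^2/2}kg(z)\,dz$ is exactly $\int_{\Phi^{-1}(1-\epsilon)}^{\eta}(\eta-s)kg(s^2/2)\,ds$, the standardized cost of lifting the quantile band $[1-\epsilon,\Phi(\eta)]$ up to $\eta$). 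Two points would need care in a full write-up. First, your projection argument only shows the one-dimensional problem upper-bounds the multivariate one, which makes~\eqref{Reformulation_General_LinearCC} sufficient but not yet an equivalent reformulation; the converse holds because Cauchy--Schwarz is tight along $\Sigma_t a(x_t)/\sigma$, so any one-dimensional perturbation of $W_1$-cost $\sigma\theta$ lifts to a multivariate perturbation of Mahalanobis cost $\theta$ by transporting mass along that direction. Second, the phrase ``the worst-case tail probability equals $1-\Phi(\eta^*)$'' is a misstatement: at the critical threshold the worst-case tail equals $\epsilon$, while $1-\Phi(\eta^*)=\epsilon'$ is the \emph{reference} tail there, which is precisely why the robust constraint collapses to a nominal chance constraint at the tightened level $\epsilon'$. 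The obstacle you correctly identify---proving the optimal adversary shifts exactly the cheapest band of mass across the threshold---is the content of the cited one-dimensional worst-case value-at-risk analysis, so your sketch is a viable self-contained alternative to the paper's citation-only treatment.
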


Note that Proposition 2 is an immediate result from Theorem 4.8 of \cite{Chen-MF-2021} and Proposition 2 of \cite{Ruan-arxiv-2023}. See \cite{Ruan-arxiv-2023} and \cite{Chen-MF-2021} for the detailed proof of Proposition 2.

Before applying Proposition 2, we can determine the value of  $\eta^*$ by solving the following optimization problem:
\begin{align}
\nonumber \eta^*=& \min_{\eta \geq 1-\Phi^{-1}(1-\epsilon)} \eta\\
\text{s.t.} \quad & \eta(\Phi(\eta)-(1-\epsilon))-\int_{\Phi^{-1}(1-\epsilon)^2/2}^{\eta^2/2}kg(z)dz \geq \theta.
\end{align}

Let $F(\eta)=\eta(\Phi(\eta)-(1-\epsilon))-\int_{\Phi^{-1}(1-\epsilon)^2/2}^{\eta^2/2}kg(z)dz - \theta.$ Since $F'(\eta)=\Phi(\eta)-(1-\epsilon) \geq 0$ for $\eta \geq 1-\Phi^{-1}(1-\epsilon)$, thereby $F(\eta)$ is monotonically increasing from negative to positive on $\eta \geq 1-\Phi^{-1}(1-\epsilon)$. That is, $\eta^*$ occurs when $F(\eta^*)=0$. We can solve the problem $F(\eta^*)=0$ efficiently via bisection search. 

\vspace{-0.4cm}
\subsection{Reformulations of DR Quadratic Chance Constraints}
{The chance constraints (\ref{CC_Lineflow}), (\ref{CC_substation}), and~\eqref{CC_Fnadirconstraint} contain quadratic terms of random variables, i.e., \(\tilde{f}^P_{ij,t}\), \(\tilde{f}^Q_{ij,t}\), \(\tilde{V}_{i,t}\), \(\tilde{p}_t^{ex}\), \(\tilde{q}_t^{ex}\), and \(p_t^{imb}\). There is no straightforward analytical reformulation for quadratic chance constraints unlike the DR linear ones in Section IV.C \cite{Lubin-arxiv-2016}.}
Here we show how to approximate such DR quadratic chance constraints by using a set of DR linear chance constraints.

\subsubsection{DR Quadratic Chance Constraints on Line Power Flows and Power Exchanges}  Following~\cite{Lubin-arxiv-2016}, we approximate the DR quadratic chance constraint (\ref{CC_Lineflow}) by a set of the DR linear chance constraints and non-probabilistic constraints:
\begin{subequations}
\label{Quadatic_CC_LinePowerFlow}
\begin{align}
\label{Linear_CC_active_LinePowerFlow}
&\inf_{\mathbb{P}\in \mathcal{P}_t }\mathbb{P}\left\{|\tilde{f}_{ij,t}^P|\leq K_{ij,t}^P\right\} \geq 1-\nu_L\epsilon_L\\
\label{Linear_CC_reactive_LinePowerFlow}
&\inf_{\mathbb{P}\in \mathcal{P}_t }\mathbb{P}\left\{|\tilde{f}_{ij,t}^Q|\leq K_{ij,t}^Q\right\} \geq 1-(1-\nu_L)\epsilon_L \\
\label{Quadatic_LinePowerFlow}
&(K_{ij,t}^P)^2+(K_{ij,t}^Q)^2 \leq (f_{ij}^{\max})^2
\end{align}
\end{subequations}
where $K_{ij}^P$ and $K_{ij}^Q$ are the introduced auxiliary variables. The predefined parameter $\nu_L \in (0,1)$ balances the trade-off between violation probabilities in the two DR linear chance constraints (\ref{Linear_CC_active_LinePowerFlow}) and (\ref{Linear_CC_reactive_LinePowerFlow}). DR linear chance constraints (\ref{Linear_CC_active_LinePowerFlow}) and (\ref{Linear_CC_reactive_LinePowerFlow}) involving absolute value operators are essentially two-sided DR linear chance constraints, which can be approximated by two single-sided DR linear chance constraints as:
\begin{subequations}
\label{Quadatic_CC_LinePowerFlow_single}
\begin{align}
\label{Linear_CC_active_LinePowerFlow_single_LEQ}
&\inf_{\mathbb{P}\in \mathcal{P}_t }\mathbb{P}\left\{\tilde{f}_{ij,t}^P\leq K_{ij,t}^P\right\} \geq 1-\nu_L\epsilon_L\\
\label{Linear_CC_active_LinePowerFlow_single_GEQ}
&\inf_{\mathbb{P}\in \mathcal{P}_t}\mathbb{P}\left\{\tilde{f}_{ij,t}^P\geq -K_{ij,t}^P\right\} \geq 1-\nu_L\epsilon_L \\
\label{Linear_CC_reactive_LinePowerFlow_single_LEQ}
&\inf_{\mathbb{P}\in \mathcal{P}_t}\mathbb{P}\left\{\tilde{f}_{ij,t}^Q\leq K_{ij,t}^Q\right\} \geq 1-(1-\nu_L)\epsilon_L\\
\label{Linear_CC_reactive_LinePowerFlow_single_GEQ}
&\inf_{\mathbb{P}\in \mathcal{P}_t}\mathbb{P}\left\{\tilde{f}_{ij,t}^Q\geq-K_{ij,t}^Q\right\} \geq 1-(1-\nu_L)\epsilon_L.
\end{align}
\end{subequations}
We can apply a similar approximation for the DR quadratic chance constraint on power exchange (\ref{CC_substation}).

\subsubsection{DR Quadratic Chance Constraint on Maximum Frequency Deviation} 
By introducing the auxiliary variable $w_t$, We can equivalently transform constraint~\eqref{CC_Fnadirconstraint} as:
\begin{subequations}
\label{CC_Fnadirconstraint_Relaxtion}
\begin{align}
\label{CC_Fnadirconstraint_Linear}
&\inf_{\mathbb{P}\in \mathcal{P}_t}\mathbb{P}\left\{|p_t^{imb}-z_t|\leq w_t\right\} \geq 1-\epsilon_F\\
\label{Fnadirconstraint_Quadratic}
&w_t^2 =x_ty_t\\
\label{Nonnegative_Fnadirconstraint}
&w_t\geq0
\end{align}
\end{subequations}
where (\ref{CC_Fnadirconstraint_Linear}) can be further approximated by two single-sided DR linear chance constraints. 
%
However, equation (\ref{Fnadirconstraint_Quadratic}) is a non-convex quadratic equality constraint. We relax it to the following rotated conic constraint:
\begin{align}
\label{Fnadirconstraint_SOCR}
w_t^2 \leq x_ty_t.
\end{align}

We prove such relaxation is tight, which is also supported by the computational results in Section V-A.
\begin{proposition}
    The constraint~\eqref{CC_Fnadirconstraint} can be replaced by constraints~\eqref{CC_Fnadirconstraint_Linear},~\eqref{Fnadirconstraint_SOCR}, and~\eqref{Nonnegative_Fnadirconstraint} without loss of optimality.
\end{proposition}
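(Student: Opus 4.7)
The plan is to establish equivalence between the feasible set of (\ref{CC_Fnadirconstraint}) in the original decision variables and the projection onto those same variables of the feasible set defined by (\ref{CC_Fnadirconstraint_Linear}), (\ref{Fnadirconstraint_SOCR}), and (\ref{Nonnegative_Fnadirconstraint}). Since $w_t$ is a fresh auxiliary variable that appears in neither the objective (\ref{obj}) nor any other constraint of (DR-FCMS), projection equivalence implies optimal-value equivalence, which is precisely the content of ``without loss of optimality.''

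First I would prove the easy direction: if some $w_t\geq 0$ satisfies (\ref{CC_Fnadirconstraint_Linear}) and (\ref{Fnadirconstraint_SOCR}), then (\ref{CC_Fnadirconstraint}) holds. Whenever a realization of $\xi_t$ lies in the event $\{|p_t^{imb}-z_t|\leq w_t\}$, squaring yields $(p_t^{imb}-z_t)^2\leq w_t^2\leq x_ty_t$; since this event has probability at least $1-\epsilon_F$ under every $\mathbb{P}\in\mathcal{P}_t$, the quadratic DRCC inherits the bound. For the reverse direction I would use the substitution $w_t:=\sqrt{x_ty_t}$. Before invoking the square root one needs $x_t,y_t\geq 0$ at any feasible point: $y_t\geq 0$ follows immediately from (\ref{yt_UP}), (\ref{yt_DW}) together with (\ref{Nonegative_PFR}) since $T_G>0$; $x_t\geq 0$ follows from (\ref{xt_UP}), (\ref{xt_DW}) because the physical budget $4\Delta f^{\max}H_t\geq R^{E,\cdot}_tT_E$ must hold whenever (\ref{CC_Fnadirconstraint}) is non-vacuous (otherwise the right-hand side would be negative while the left-hand side is a square). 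With $w_t=\sqrt{x_ty_t}$, (\ref{Fnadirconstraint_SOCR}) is met with equality and (\ref{Nonnegative_Fnadirconstraint}) is immediate; moreover, on the event $\{(p_t^{imb}-z_t)^2\leq x_ty_t\}$ one has $|p_t^{imb}-z_t|\leq \sqrt{x_ty_t}=w_t$, so (\ref{CC_Fnadirconstraint_Linear}) inherits the same probability-at-least-$1-\epsilon_F$ guarantee uniformly over $\mathcal{P}_t$.

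The main subtlety is not the chance-constraint manipulation itself but ruling out pathological configurations in which the rotated-conic relaxation (\ref{Fnadirconstraint_SOCR}) is satisfied by $w_t=0$ while $x_ty_t$ is driven negative; in such a case the relaxation would be strictly weaker than the quadratic equality $w_t^2=x_ty_t$. The argument above precludes this by appealing to the big-M equalities (\ref{xt_UP})--(\ref{xt_DW}), which hard-wire $x_t$ to $4\Delta f^{\max}H_t-R^{E,\cdot}_tT_E$, so nonnegativity of $x_t$ is a structural property of the frequency-deviation budget rather than an additional hypothesis. Once this is pinned down, projection equivalence follows, both optimal values coincide, and the relaxation is tight.
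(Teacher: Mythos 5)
Your proposal is correct and follows essentially the same route as the paper's proof: one direction by squaring inside the probability event to pass from $|p_t^{imb}-z_t|\leq w_t$ and $w_t^2\leq x_ty_t$ to the quadratic chance constraint, the other by substituting $w_t=\sqrt{x_ty_t}$ and using the fact that $w_t$ appears in no other constraint or in the objective. You are in fact slightly more careful than the paper, which writes the substitution as $\hat{w}_t=\hat{x}_t\hat{y}_t$ (evidently a typo for the square root) and does not address the nonnegativity of $x_ty_t$ that your argument pins down via \eqref{yt_UP}--\eqref{Nonegative_PFR} and \eqref{xt_UP}--\eqref{xt_DW}.
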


\begin{proof}
    Suppose we solve DR-FCMS with constraint~\eqref{CC_Fnadirconstraint}, denoted as \((\hat{M})\), and obtain a partial solution \(\hat{x}, \hat{y}, \hat{z}\), worst-case distribution \(\hat{\mathbb{P}}_t\) and optimal value \(\hat{F}\). On the other hand, suppose we solve DR-FCMS with constraint~\eqref{CC_Fnadirconstraint_Linear},~\eqref{Fnadirconstraint_SOCR}, and~\eqref{Nonnegative_Fnadirconstraint}, denoted as problem \((\tilde{M})\), and obtain a partial solution \(\tilde{w}, \tilde{x}, \tilde{y}, \tilde{z}\), worst-case distribution \(\tilde{\mathbb{P}}_t\) and optimal value \(\tilde{F}\).

    (\(\implies\)) The solution \(\hat{x}, \hat{y}, \hat{z}\) is a feasible solution to model~\((\tilde{M})\) as we can introduce \(\hat{w}_t = \hat{x}_t \hat{y}_t,\ \forall t\) and constraints~\eqref{Fnadirconstraint_Quadratic},~\eqref{Fnadirconstraint_SOCR}, and~\eqref{Nonnegative_Fnadirconstraint} are satisfied. Therefore, \(\hat{F} \geq \tilde{F}\).

    (\(\impliedby\)) By the feasibility of \(\tilde{w}, \tilde{x}, \tilde{y}, \tilde{z}\), we can derive the following inequalities for all \(t\):
    \begin{subequations} \label{eqn:ineqn}
        \begin{align}
            & \quad \inf_{\mathbb{P} \in \mathcal{P}_t} \mathbb{P} \left\{(p_t^{imb}-\tilde{z}_t)^2\leq \tilde{x}_t \tilde{y}_t \right\} \\
            = & \quad \tilde{\mathbb{P}}_t \left\{(p_t^{imb}-\tilde{z}_t)^2\leq \tilde{x}_t \tilde{y}_t \right\} \\
            \geq & \quad  \tilde{\mathbb{P}}_t \left\{|p_t^{imb}-\tilde{z}_t|\leq \tilde{w}_t \right\} \label{eqn:wstep}\\
            \geq & \quad \inf_{\mathbb{P} \in \mathcal{P}_t} \mathbb{P} \left\{|p_t^{imb}-\tilde{z}_t|\leq \tilde{w}_t \right\} \label{eqn:infstep}\\
            \geq & \quad 1 - \epsilon_F. \label{eqn:ccdefinition}
        \end{align}
    \end{subequations}
    Inequality~\eqref{eqn:wstep} is derived from the fact that \(\tilde{w}_t^2 \leq \tilde{x}_t \tilde{y}_t\); inequality~\eqref{eqn:infstep} follows the \(\inf\) operator; inequality~\eqref{eqn:ccdefinition} follows the feasibility of solution~\(\tilde{x},\tilde{y},\tilde{z}\), which satisfies constraint~\eqref{CC_Fnadirconstraint_Linear}. The set of inequalities~\eqref{eqn:ineqn} shows that the solution \(\tilde{x},\tilde{y},\tilde{z}\) also satisfies constraint~\eqref{CC_Fnadirconstraint}. Therefore, \(\tilde{F} \geq \hat{F}\). 

    Combining the results above, we conclude that \(\tilde{F} = \hat{F}\) and it is without loss optimality to replace constraint~\eqref{CC_Fnadirconstraint} by constraints~\eqref{Fnadirconstraint_Quadratic},~\eqref{Fnadirconstraint_SOCR}, and~\eqref{Nonnegative_Fnadirconstraint}.
\end{proof}



So far, the DR quadratic chance constraints in the proposed model (DR-FCMS) have been transformed into single-sided DR linear chance constraints, which can be straightforwardly reformulated as SOC constraints according to Proposition 2.
\vspace{-0.4cm}
\subsection{Linearization of the Bilinear Terms}
Constraints (\ref{Batteries_up_IR_reserve_limit}), (\ref{Batteries_down_IR_reserve_limit}), and (\ref{RES_up_IR_reserve_limit}) involving bilinear terms have the same mathematical form as:
\begin{align}
\label{General_Blinear}
X=\rho Y,
\end{align}
where $X$, $\rho$, and $Y$ are the semicontinuous, binary, and continuous variables. We can introduce the big-M to linearize equation~\eqref{General_Blinear} as follows:
\begin{subequations}
\label{General_BlinearMILP}
\begin{align}
&-M (1- \rho) \leq X-Y \leq M (1- \rho)\\
&-M  \rho \leq X \leq M \rho
\end{align}
\end{subequations}


\vspace{-0.2cm}
\section{Case Studies} \label{sec.Case}
We evaluate our method on a modified IEEE 33-node microgrid system in \cite{Baran-TPD-2016}, adding three DGs, two BESSs, and two RESs. We consider a time horizon of a day with each period spanning an hour (\(|\mathcal{T}| = 24\)) and display the profiles of RES active power forecasts and total loads in Fig.~\ref{fig:RES_load_forecast}. Detailed parameters of the test system are available in \cite{Data-github-2023}. We set additional frequency parameters as $f_0$=50 Hz, $T_{DB}$=0.2 s, $T_E$=1 s, $T_G$=8 s,  $RoCoF^{\max}$=0.5 Hz/s, and $\Delta f^{\max}$= 0.5 Hz. 

To obtain candidate solutions, we generate 100 samples of RES forecast errors from Gaussian distribution whose mean is set to be 0 for all $t\in \mathcal{T}$, and the standard deviation is set to be 5\% of RES forecast values. Later, we generate an additional 10,000 samples from the same Gaussian distribution to test the solution's out-of-sample performance.
We set $\epsilon_G=\epsilon_B=\epsilon_S=\epsilon_L=\epsilon_V=\epsilon_E=\epsilon_F=\epsilon=0.05$ and $\nu_L=\nu_E=0.5$. The radius of the Wasserstein ball is set to \(\theta = 0.01\).

All numerical tests are conducted on a laptop with an Intel Core i7-9750H CPU 2.6 GHz processor and 16 GB memory. All optimization models are coded in MALTAB using YALMIP \cite{Lofberg-2004} and solved by Gurobi v9.0.0~\cite{Gurobi-2022}, with the parameter MIPGap set as 0.1\%.
%
\begin{figure}[t]
	\centering
	\includegraphics[width=0.45\textwidth]{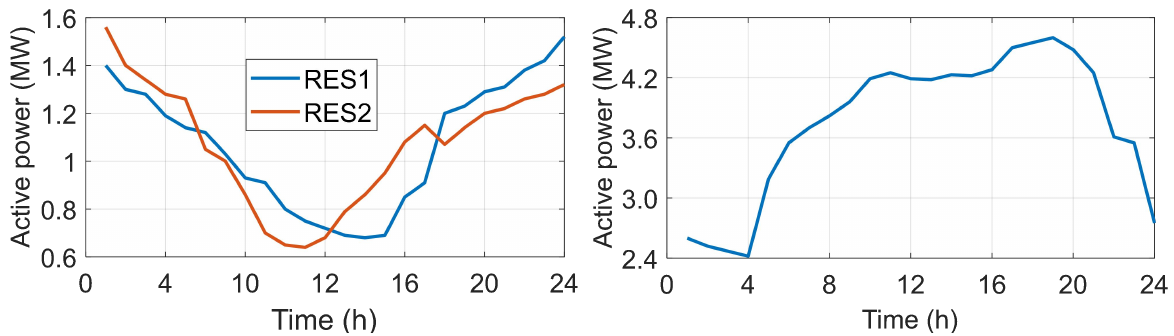}
      \vspace{-0.3cm}
	\caption{Profiles of RES active power forecast values and total load: RESs (left plot) and total load (right plot).}
	\label{fig:RES_load_forecast}
\end{figure}

\vspace{-0.2cm}
\subsection{Tightness of Constraint~\eqref{CC_Fnadirconstraint} Reformulation}
Our reformulation is a relaxation of (DR-FCMS) as we change the equality in constraint~\eqref{Fnadirconstraint_Quadratic} to an inequality~\eqref{Fnadirconstraint_SOCR}. We define the mean relaxation gap as $
\varepsilon_{gap}=\frac{1}{|\mathcal{T}|}\sum_{t\in \mathcal{T}}|x_ty_t-w_t^2|$. We solve the optimization problem ten times and report the maximum, average, and minimum $\varepsilon_{gap}$ in Table \ref{tab:gap}. We observe that the maximum relaxation gap is in the same order as the numerical tolerance, indicating a tight relaxation scheme.
\begin{table}[ht]
\vspace{-0.4cm}
 \centering
 \setlength{\abovecaptionskip}{0.0cm}
 \caption{Values of relaxation gap}
 \vspace{-0.05cm}
 \label{tab:gap}
 \begin{tabular}{ccccccccc}
 \hline\hline
       &Maximum       &Average      &Minimum    \\
 \hline
  $\varepsilon_{gap}$  &1.1e-4     &3.3e-5    &1.7e-7  \\
  \hline\hline
\end{tabular}
\end{table}

\vspace{-0.6cm}
\subsection{The Value of Co-optimization of Power Exchange, PFR Reserves, and Frequency Constraints}
We consider the following three microgrid scheduling methods for comparisons to investigate the value of co-optimization of power exchange, PFR, and frequency constraints.
\begin{itemize}
\item M1: FCMS model in \cite{Chu-TSG-2021}.
\item M2: The microgrid scheduling model with capacity-based PFR constraint in \cite{Wu-TSG-2020}.
\item M3: Our proposed model (DR-FCMS).
\end{itemize}

For a fair comparison, M1 is obtained from M3 by replacing the post-islanding imbalance in (\ref{PowerImbalanceDirectionCCUP})-(\ref{CC_QSSDOWN}) with a constant load increase, and the frequency-related constraints are thus reduced to deterministic counterpart. M2 is obtained from M3 by dropping constraints on RoCoF and maximum frequency deviation (\ref{CC_RoCoFconstraintUP})-(\ref{CC_Fnadirconstraint}). Note that the price for power exchange with the main grid is set less than the cost coefficients of DGs to encourage the power exchange. Fig.~\ref{fig:Pex_PFR} presents the results of power exchange and total PFR reserves produced by M1, M2, and M3, in which the shadowed area is bounded by the maximum and the minimum power exchange realizations among 100 samples of RES uncertainty and the dashed line represents the total PFR reserve. Fig.~\ref{fig:RoCoF_MFD_M1}-\ref{fig:RoCoF_MFD_M3} show the distributions of hourly post-islanding frequency dynamic metrics, i.e., RoCoF and maximum frequency deviation (MFD), which are calculated by $RoCoF=\frac{-p_t^{imb}}{2H_t}$ and (\ref{maximumfrequencydeviation}) with solutions produced by M1, M2, and M3 under 100 in-sample post-islanding realizations.

We observe that the power exchange with the main grid in M1 is the largest and significantly more than those in M2 and M3 since the post-islanding power imbalance considered in M1 decouples the power exchange and is thus not restricted by PFR-related constraints while these work on M2 and M3. As a result, M1 produces the cheapest scheduling scheme (\$1544) against M2 and M3. However, the total PFR reserve in M1 is only upward dedicated to the fixed sudden load increase and notably mismatched with the uncertain power exchange, which makes it hard to ensure frequency security in the event of islanding. As shown in Fig.~\ref{fig:RoCoF_MFD_M1}, the RoCoF and MFD metrics significantly violate the requirements $RoCoF^{\max}=0.5$ Hz/s and $\Delta f^{\max}=0.5 $ Hz in post-islanding realizations.

In contrast, the total PFR reserves in M2 and M3 properly match the power exchange thanks to the co-optimization of power exchange and PFR reserves. Even so, violations of both RoCoF and MFD metrics in M2 exist in many post-islanding realizations due to lacking frequency constraints, as shown in Fig.~\ref{fig:RoCoF_MFD_M2}. Moreover, frequency dynamics are unstable if islanding occurs at hours 1-4, and the corresponding RoCoF and MFD metrics are outside the ranges [-1, 1] Hz/s and [-2, 1] Hz, respectively. By co-optimizing the power exchange, PFR reserves, and frequency constraints, the proposed method (M3) can guarantee frequency security after an unscheduled islanding event in a permissible security level despite its scheduling scheme with a slightly higher cost (\$1795) than M2. As seen in Fig.~\ref{fig:RoCoF_MFD_M3}, the RoCoF metrics for all post-islanding realizations satisfy the requirement $RoCoF^{\max}=0.5$ Hz/s in all hours while there exist three samples at hour 5 and one sample at hour 23 in which the MFD metric is violated due to the DR chance constraint modeling. Still, they satisfy the pre-specified permissible risk level $\epsilon=0.05$.




\begin{figure}[!t]
	\centering
\includegraphics[width=0.4\textwidth]{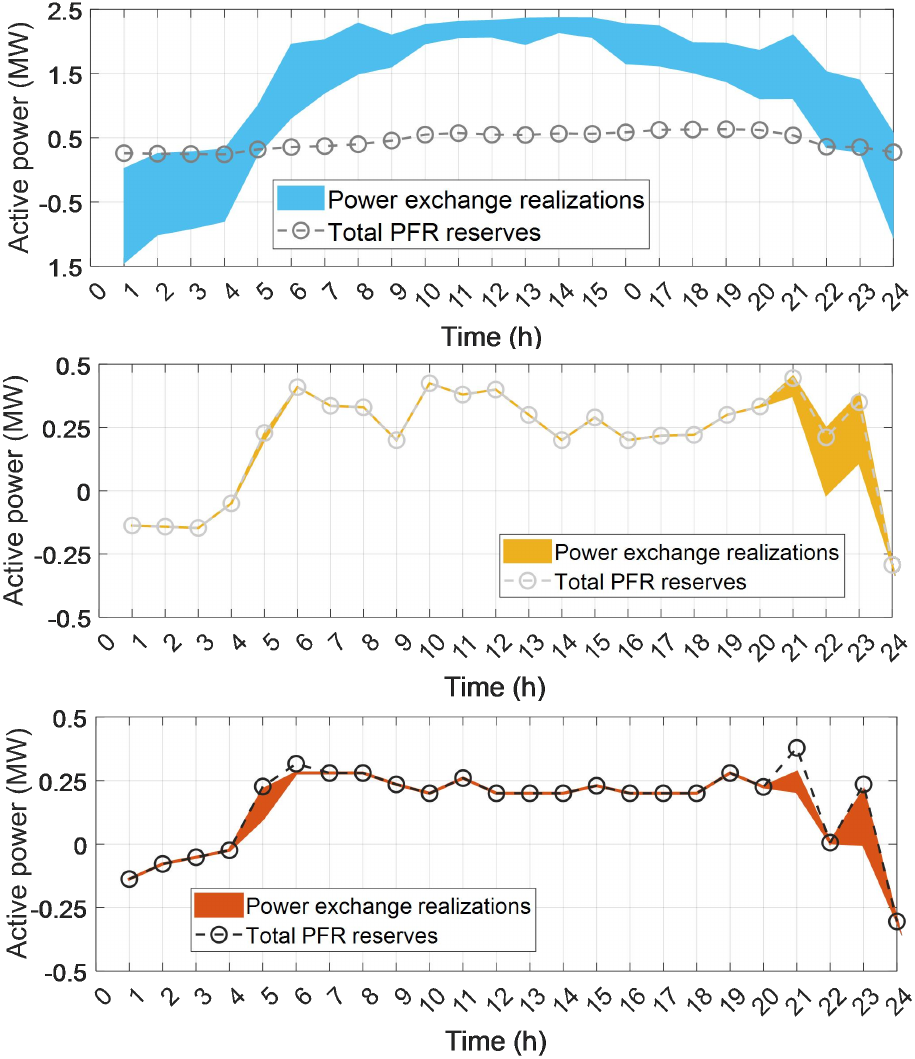}
        \vspace{-0.3cm}
	\caption{Results of power exchange realizations and total PFR reserve: M1 (top), M2 (middle), and M3 (bottom).}
	\label{fig:Pex_PFR}
\end{figure} 
\begin{figure}[!t]
	\centering
	\includegraphics[width=0.4\textwidth]{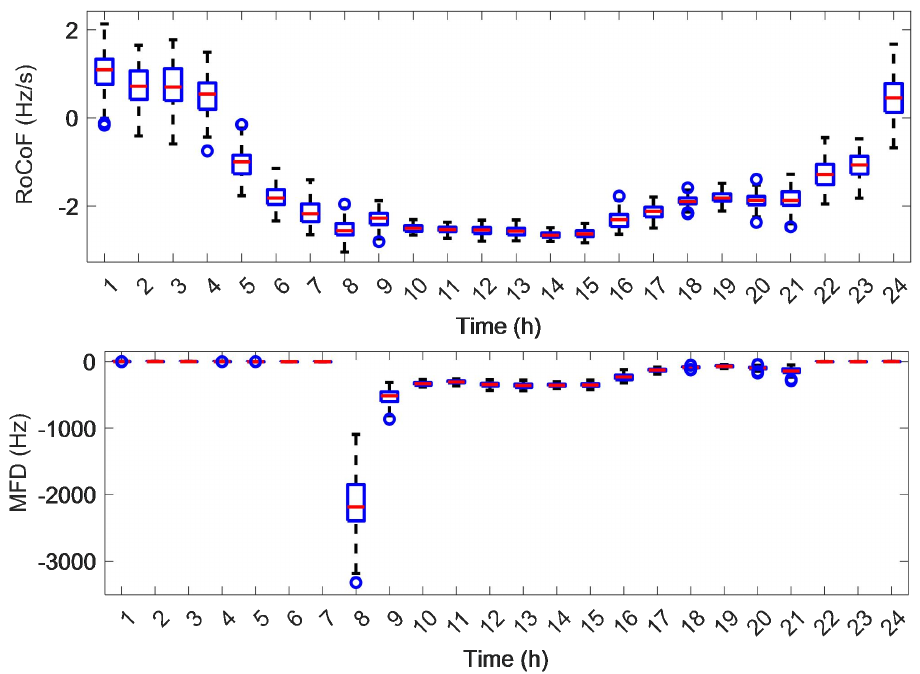}
        \vspace{-0.2cm}
	\caption{The distributions of post-islanding RoCoF (top) and MFD (bottom) in M1 under 100 post-islanding realizations.}
	\label{fig:RoCoF_MFD_M1}
\end{figure} 
\begin{figure}[!t]
	\centering
	\includegraphics[width=0.4\textwidth]{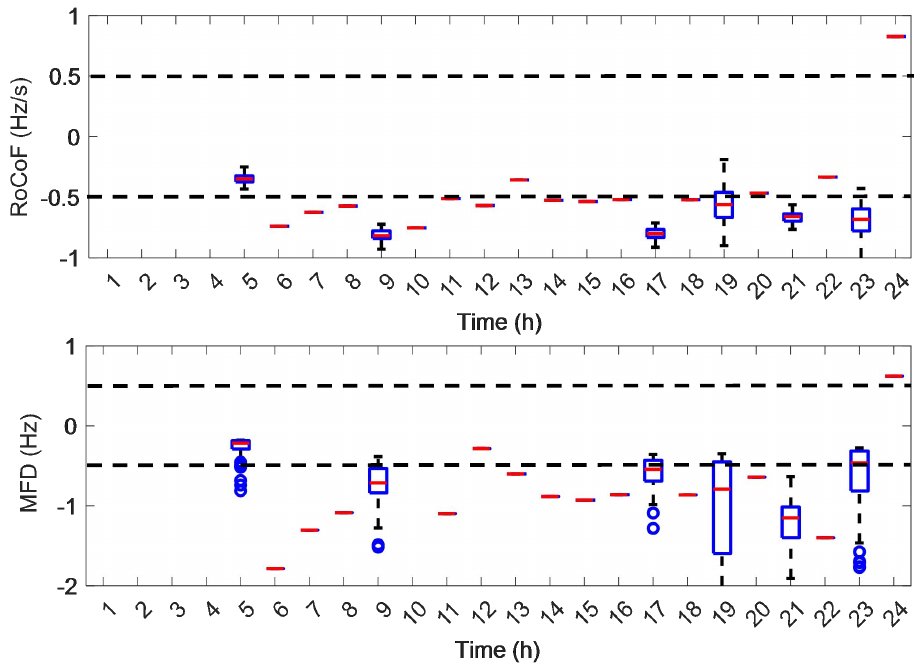}
        \vspace{-0.2cm}
	\caption{The distributions of post-islanding RoCoF (top) and MFD (bottom) in M2 under 100 post-islanding realizations.}
	\label{fig:RoCoF_MFD_M2}
\end{figure} 

\begin{figure}[!t]
	\centering
	\includegraphics[width=0.4\textwidth]{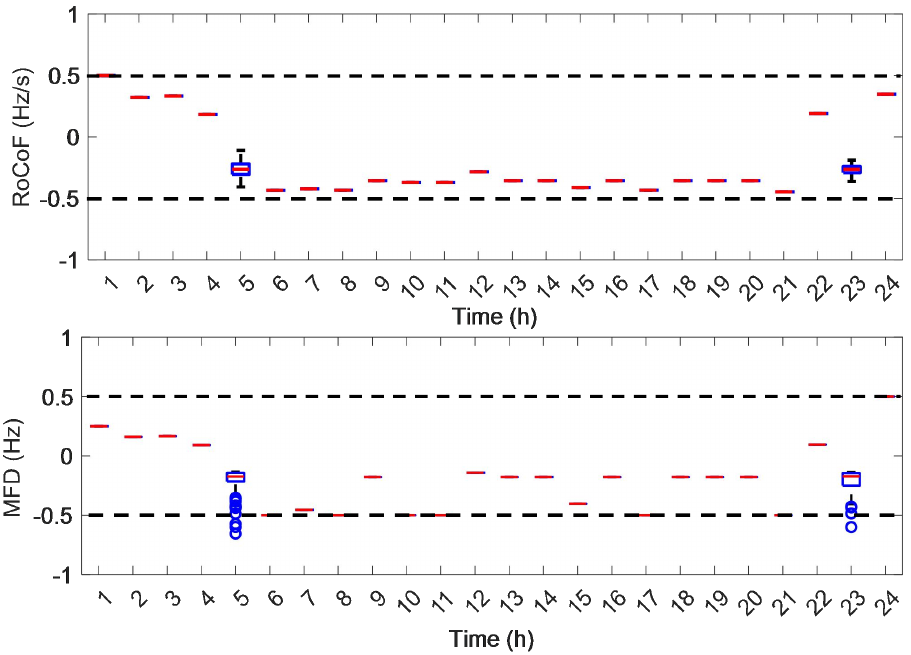}
        \vspace{-0.2cm}
	\caption{The distributions of post-islanding RoCoF (top) and MFD (bottom) in M3 under 100 post-islanding realizations.}
	\label{fig:RoCoF_MFD_M3}
\end{figure} 
\begin{table}[!t]
\vspace{-0.3cm}
 \centering
 \setlength{\abovecaptionskip}{0.0cm}
 \caption{Cost comparisons of benchmark methods}
 \label{tab:cost}
 \begin{tabular}{ccccccccc}
 \hline\hline
 Methods           &M1       &M2      &M3    \\
 \hline
  Total cost (\$)  &1544     &1788    &1795  \\
  \hline\hline
\end{tabular}
\end{table}
\vspace{-0.2cm}
\subsection{Impact of Uncertainty Models on DR-FCMS}
We compare the following four chance-constrained (CC) methods to showcase the value of the adopted Wasserstein-metric DRCC approach for FCMS under uncertainty.

\begin{itemize}
\item Gaussian-CC: Gaussian-assumption CC method.
\item M-DRCC: DRCC with a moment-based ambiguity set.
\item W-DRCC: DRCC with a Wasserstein-metric ambiguity set, built on a uniform reference distribution and the Wasserstein distance defined by an Euclidean norm.
\item EW-DRCC: DRCC with a Wasserstein-metric ambiguity set, built on an elliptical reference distribution and the Wasserstein distance defined by a Mahalanobis norm.
\end{itemize}
Gaussian-CC, M-DRCC, and EW-DRCC are sample-free methods while W-DRCC is a sample-based method whose computation size increases with the number of DR chance constraints and samples.  {Due to the scalability issue, we just apply W-DRCC to reformulate the frequency-related DR chance constraints   (\ref{PowerImbalanceDirectionCCUP})-(\ref{CC_QSSDOWN})  and worst-case expected cost terms in objective function (\ref{obj})} while the remaining DR chance constraints are still addressed by EW-DRCC. For each method, we solve the optimization model ten times and report the maximum, average, and minimum cost, empirical violation probability (EVP), and computation time in Table \ref{tal:comparisons} where we define EVP as the percent of violated samples in the out-of-sample test by averaging over all chance constraints. {Additionally, M-DRCC is infeasible until we increase $\epsilon$ to 0.11 due to its over-conservativeness. Therefore, we include M-DRCC's results with $\epsilon=0.11$ in Table \ref{tal:comparisons} for comparison.}


We observe that {M-DRCC,} W-DRCC, and EW-DRCC yield a higher cost than Gaussian-CC as the DRCC methods hedge against all underlying distributions within the ambiguity sets. Between these DRCC methods, EW-DRCC offers a less costly scheme than M-DRCC and W-DRCC in this case study. As for out-of-sample performance, the EVPs in Gaussian-CC exceed the target value $\epsilon=0.05$ up to 9.55\%, indicating the restriction of the Gaussian distribution assumption. Compared to Gaussian-CC, the EVPs in W-DRCC have been reduced but their average is still slightly over $\epsilon=0.05$ and maximum reaching 7.91\%, {while the EVPs in M-DRCC are significantly less than the predefined $\epsilon=0.11$, indicating its conservativeness again.} Notably, EW-DRCC has nicely maintained low EVPs satisfying the pre-specified requirement $\epsilon=0.05$. Overall speaking, the proposed EW-DRCC can achieve a decent trade-off between cost and EVP.

We further compare the computation time for these methods. As expected, the sample-free methods (Gaussian-CC, EW-DRCC) are significantly faster than the sample-based method (W-DRCC) with around 10 times in terms of average time in this case study despite only partial DR chance constraints being reformulated by W-DRCC. Between Gaussian-CC and EW-DRCC, EW-DRCC is slightly slower than Gaussian-CC but with a comparable complexity as the DR chance constraint is also reformulated as an SOC constraint in EW-DRCC.


\begin{table}[t]
 \centering
 \setlength{\abovecaptionskip}{0.1cm}   
  \caption{Results under four chance-constrained FCMS methods}
  \vspace{-0.1cm}
  \label{tal:comparisons}
  \begin{threeparttable}
  \begin{tabular}{lccccccc}
   \hline\hline
                     &    &Gaussian-CC   &M-DRCC\tnote{*} &W-DRCC     &EW-DRCC    \\
                    
   \hline
                     &max  &1790    &{1809}  &1823      &1799    \\
  Cost (\$)          &avg  &1789    &{1808} &1820      &1796    \\
                     &min  &1788    &{1807}  &1815      &1792    \\
  \hline
                     &max  &9.55    &{3.35}  &7.91      &4.30   \\
  EVP (\%)           &avg  &6.16    &{1.95}  &5.25      &4.02   \\
                     &min  &5.25    &{0.45}  &3.19      &3.82    \\
  \hline
                     &max  &158     &{118}  &932      &143     \\
  Time (s)           &avg  &68      &{80}  &610       &58       \\
                     &min  &15      &{42}  &390      &39       \\
  \hline\hline
\end{tabular}
\begin{tablenotes}
\footnotesize
\item[*] {denotes results of M-DRCC under $\epsilon=0.11$ as it is infeasible until we increase $\epsilon$ to 0.11 in this case study.}
\end{tablenotes}
\end{threeparttable}
\end{table}

\subsection{The Effects of Optimized Deloading Ratios}
We design the following three cases to investigate the effects of optimized deloading ratios on the system operation. As discussed in the technical studies \cite{Ela-Report-2014}, the RES deloading ratio is generally chosen as at most 10\%. We set the fixed deloading ratios for RESs in Case 1 below 10\% as 8\% and set fixed deloading ratios for RESs in Case 2 as 10\%. The optimized deloading ratios for RESs are considered in Case 3 with upper bounds set to 10\%.
\begin{itemize}
\item \emph{Case 1:} Fixed RESs deloading ratios at \(\delta_{s,t} = 8\%\).
\item \emph{Case 2:} Fixed RESs deloading ratios at \(\delta_{s,t} = 10\%\).
\item \emph{Case 3:} Optimized deloading ratios for RESs with upper bounds set to 10\%.
\end{itemize}

As shown in Fig.~\ref{fig:deloading}, we observe that the deloading behaviors for RES1 and RES2 in Case 3 only take place in the periods with high RES and low load, i.e., hours 1, 24 for RES 1 and hours 1-3, 24 for RES2, while in Case 1 and Case 2, deloading behaviors for RES1 and RES2 always exist during the 24-hour dispatch horizon with fixed deloading ratios of 8\% and 10\%, respectively, lacking the co-optimization of deloading ratios, frequency constraints, and power exchange. As a result, Case 3 provides the lowest costly scheme (\$1795), yielding 4.6\% and 5.8\% cost savings compared to Case 1 (\$1881) and Case 2 (\$1905), as illustrated in Table \ref{tal:deloading}.

\begin{figure}[t]
	\centering
	\includegraphics[width=0.42\textwidth]{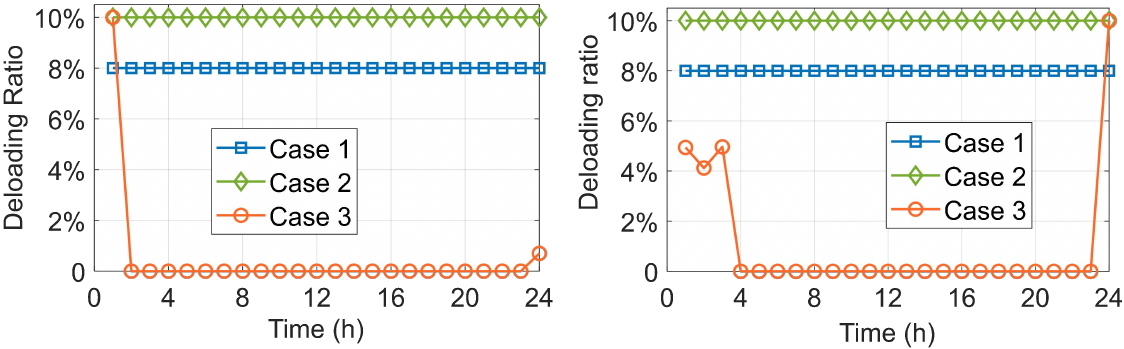}
        \vspace{-0.28cm}
	\caption{Deloading ratios in Case 1, Case 2, and Case 3: RES1 (left plot) and RES2 (right plot)}
	\label{fig:deloading}
\end{figure} 
\begin{table}[!t]
\vspace{-0.3cm}
 \centering
 \setlength{\abovecaptionskip}{0.0cm}
 \caption{Total costs under optimized and fixed deloading ratios}
    \label{tal:deloading}
 \begin{tabular}{ccccccccc}
 \hline\hline
 Methods           &Case 1      &Case 2      &Case 3    \\
 \hline
  Total cost (\$)  &1881     &1905    &1795  \\
  \hline\hline
\end{tabular}
\end{table}

\section{Conclusion} \label{sec.Conclusion}
We propose a DR-FCMS model that co-optimizes unit commitments, power dispatch, power exchange, PFR reserves, and RESs' deloading ratios to achieve seamless unscheduled islanding with frequency security guaranteed. Our model accounts for frequency constraints under unknown and uncertain post-islanding power imbalance by using the DRCC approach, where the corresponding DR quadratic chance constraint is successfully converted into SOC constraints. Consequently, the proposed DR-FCMS model admits a mixed-integer convex programming. We demonstrate the effectiveness of our co-optimization scheme in guaranteeing frequency security following an unscheduled islanding event with numerical results. The proposed DR-FCMS method can reliably limit the EVPs of chance constraints and achieve a better trade-off between cost, EVP, and computation efficiency than commonly used Gaussian-CC, M-DRCC, and W-DRCC in the literature. The numerical results also show the cost saving of optimized deloading ratios for RESs compared to the fixed ones.

\newpage

 



\end{document}